\newtheorem{thm}{Theorem}[section]
\newtheorem{defn}[thm]{Definition}
\newtheorem{lemma}[thm]{Lemma}
\theoremstyle{remark}
\newtheorem{remark}[thm]{Remark}
\def\XXint#1#2#3{{\setbox0=\hbox{$#1{#2#3}{\int}$}
\vcenter{\hbox{$#2#3$}}\kern-.5\wd0}}
\newcommand\cbrk{\text{$]$\kern-.15em$]$}}
\newcommand\opar{\text{\,\raise.2ex\hbox{${\scriptstyle
|}$}\kern-.34em$($}}
\newcommand\cpar{\text{$)$\kern-.34em\raise.2ex\hbox{${\scriptstyle |}$}}\,}
\def\<{\langle}
\def\>{\rangle}
\def\E{{\mathds E}}
\newcommand\bL{\mathbb{L}}
\newcommand\bR{\mathbb{R}}
\newcommand\bH{\mathbb{H}}
\newcommand\bZ{\mathbb{Z}}
\newcommand\bD{\mathbb{D}}
\newcommand\bS{\mathbb{S}}
\newcommand\bN{\mathbb{N}}
\newcommand\cB{\mathcal{B}}
\newcommand\cD{\mathcal{D}}
\newcommand\cF{\mathcal{F}}
\newcommand\cG{\mathcal{G}}
\newcommand\cP{\mathcal{P}}
\newcommand\cO{\mathcal{O}}
\def\R {{\mathbb R}}
\newcommand{\mysection}[1]{\section{#1}
\setcounter{equation}{0}}
\newcommand{\one}{\ensuremath{\mathds 1}} 
\newcommand{\prob}{\ensuremath{\mathds P}} 
\newcommand{\wraum}{\ensuremath{\left(\Omega, \mathcal{F}, \prob\right)}}
\newcommand{\pred}{\ensuremath{\mathcal{P}}}
\newcommand{\cont}{\ensuremath{\mathcal{C}}}
\newcommand{\abs}[1]{\ensuremath{\lvert #1 \rvert}}
\newcommand{\Abs}[1]{\ensuremath{\big\lvert  #1 \big\rvert}}
\newcommand{\nrm}[1]{\ensuremath{\lVert #1 \rVert}}
\newcommand{\nnrm}[2]{\ensuremath{\lVert #1 \rVert_{#2}}}		
\newcommand{\gnnrm}[2]{\ensuremath{\big\lVert #1 \big\rVert_{#2}}}
\newcommand{\nrklam}[1]{(#1)} 
\newcommand{\rklam}[1]{\left(#1\right)}					
\newcommand{\grklam}[1]{\big(#1\big)}
\newcommand{\sgrklam}[1]{\Big(#1\Big)}
\newcommand{\ssgrklam}[1]{\bigg(#1\bigg)}
\newcommand{\rrklam}[1]{\Bigg(#1\Bigg)}	
\newcommand{\ggklam}[1]{\big\{#1\big\}}						
\newcommand{\sggklam}[1]{\Big\{#1\Big\}}
\newcommand{\ssggklam}[1]{\bigg\{#1\bigg\}}
\newcommand{\ssgeklam}[1]{\bigg [#1 \bigg ]}					
\newcommand{\reklam}[1]{\Bigg [#1 \Bigg ]}					
\newcommand{\dl}{\mathrm d}    
\newcommand{\dx}{\mathrm d x}
\newcommand{\dr}{\mathrm d r}
\newcommand{\ds}{\mathrm d s}
\newcommand{\dt}{\mathrm d t}
\newcommand{\dy}{\mathrm d y}
\newcommand{\dz}{\mathrm d z}
\newcommand{\du}{\ensuremath{\mathrm{d}u}}
\newcommand{\dw}{\ensuremath{\mathrm{d}w}}
\newcommand{\domain}{\ensuremath{\mathcal{O}}}   
\DeclareFontFamily{U}{matha}{\hyphenchar\font45}
\DeclareFontShape{U}{matha}{m}{n}{
      <5> <6> <7> <8> <9> <10> gen * matha
      <10.95> matha10 <12> <14.4> <17.28> <20.74> <24.88> matha12
      }{}
\DeclareSymbolFont{matha}{U}{matha}{m}{n}
\DeclareFontFamily{U}{mathx}{\hyphenchar\font45}
\DeclareFontShape{U}{mathx}{m}{n}{
      <5> <6> <7> <8> <9> <10>
      <10.95> <12> <14.4> <17.28> <20.74> <24.88>
      mathx10
      }{}
\DeclareSymbolFont{mathx}{U}{mathx}{m}{n}
\DeclareMathDelimiter{\vvvert}{0}{matha}{"7E}{mathx}{"17}
\newcommand{\wso}{\ensuremath{K}}
\newcommand{\bwso}{\ensuremath{\mathbb{\wso}}}
\newcommand{\frwso}{\ensuremath{\mathfrak{\wso}}}
\newcommand{\wsob}{\ensuremath{H}}
\newcommand{\bwsob}{\ensuremath{\mathbb{\wsob}}}
\definecolor{felix}{rgb}{0.2,0.2,1.0} 
\definecolor{petru}{rgb}{0.7,0.1,0.1} 
\newcommand{\icp}{\color{black}}
\begin{document}

\title[Stochastic heat equations on an infinite angle]
{An $L_p$-estimate for the stochastic heat equation\\
on an angular domain in $\mathbb{R}^2$}

\author{Petru A. Cioica-Licht}
\thanks{The first author has been supported by the Deutsche Forschungsgemeinschaft (DFG, grant DA~360/20-1) and partially by the Marsden Fund Council from Government funding, administered by the Royal Society of New Zealand.
The collaboration between the authors has been significantly facilitated by a travel grant of the German Academic Exchange Program (DAAD) and the National Research Foundation of Korea (NRF) within the DAAD-NRF Bilateral Scientist Exchange Program, supporting a two-month visit of the first author to the second and the third author.}
\address{Petru A. Cioica-Licht (n\'e Cioica), Department of Mathematics and Statistics, University of Otago, PO Box~56, Dunedin 9054, New Zealand}
\email{pcioica@maths.otago.ac.nz}

\author{Kyeong-Hun Kim}
\thanks{The second author was supported by Basic Science Research Program through the National Research Foundation of Korea (NRF) funded by the Ministry of Education, Science and Technology (NRF-2014R1A1A2055538)}
\address{Kyeong-Hun Kim, Department of Mathematics, Korea University, 1 Anam-Dong, Sungbuk-gu, Seoul, 136--701, Republic of Korea}
\email{kyeonghun@korea.ac.kr}

\author{Kijung Lee}
\thanks{The third author was supported by Basic Science Research Program through the National Research Foundation of Korea (NRF) funded by the 
Ministry of Education, Science and Technology (NRF-2013R1A1A2060996)}
\address{Kijung Lee, Department of Mathematics, Ajou University, Suwon, 443--749, Republic of Korea} 
\email{kijung@ajou.ac.kr}

\author{Felix Lindner}
\address{Felix Lindner, Department of Mathematics, University of Kaiserslautern, PO Box~3049, 67653 Kaiserslautern, Germany}
\email{lindner@mathematik.uni-kl.de}

\subjclass[2010]{60H15; 35R60, 35K05}

\keywords{Stochastic partial differential equation,
stochastic heat equation,
weighted $L_p$-estimate, 
weighted Sobolev regularity,
angular domain,
non-smooth domain,
corner singularity}

\begin{abstract}
We prove a weighted $L_p$-estimate for the stochastic convolution associated to the stochastic heat equation with zero Dirichlet boundary condition on a planar angular domain $\cD_{\kappa_0}\subset\bR^2$ with angle $\kappa_0\in(0,2\pi)$.
Furthermore, we use this estimate to establish existence and uniqueness of a solution to the corresponding equation in suitable weighted $L_p$-Sobolev spaces.
In order to capture the singular behaviour of the solution and its derivatives at the vertex, we use powers of the distance to the vertex as weight functions.
The admissible range of weight parameters depends explicitly on the angle $\kappa_0$.
\end{abstract}

\maketitle


\mysection{Introduction}\label{sec:Introduction}

\noindent In this article we make a crucial step forward towards closing the gap between the (almost) fully-fledged $L_p$-theory for second order parabolic stochastic  partial differential equations (SPDEs, for short) on $\cont^1$-domains and the much less developed theory for such equations on domains with non-smooth boundary, such as Lipschitz domains. 
To this end, it is important to understand the influence and interaction of the two main sources for spatial singularities of the solution. 
On the one hand, forcing the solution of a second order parabolic SPDE on a domain $\domain\subset\bR^d$ to fulfil, e.g., a zero Dirichlet boundary condition is in general incompatible with the roughness of the noise unless the noise vanishes near the boundary. 
As a consequence, the spatial second derivatives of the solution typically blow up near the boundary, see, e.g., \cite{Fla1990,Kry1994}. 
On the other hand, the regularity of the solution can be influenced by singularities of the boundary $\partial\domain$, such as corners and edges. This influence is already known for a long time from the analysis of deterministic equations, see, e.g., \cite{Dau1988, Gri1985, Gri1992, KozMazRos1997, Naz2001, Sol2001}. 
This list is only indicative and by no means complete.

Our primary goal in this article is a better understanding of the latter, i.e., of the influence of boundary singularities on the regularity of the solutions to SPDEs. To this end, we consider the following simplified setting that is still general enough to reveal the main issues. Let $\rklam{w^k_t}$, $k\in\bN$, be a sequence of independent one-dimensional Wiener processes on a probability space $\wraum$ and fix a time horizon $T\in (0,\infty)$. We consider the stochastic heat equation 
\begin{equation}\label{eq:SHE_Intro}
\du(t,x)=(\Delta u(t,x) +f(t,x))\, \dt+ \sum_{k=1}^\infty g^k(t,x)\,\dw^k_t,\quad (t,x)\in (0,T]\times\cD,
\end{equation}
with zero Dirichlet boundary condition on a planar angular domain
\begin{equation}\label{eq:defD}
\cD=\cD_{\kappa_0}:=\big\{x\in \bR^2: x=(r\cos\vartheta,r\sin\vartheta),\; r>0,\;\vartheta\in (0,\kappa_0)\big\},
\end{equation}
with angle $\kappa_0\in (0,2\pi)$. 
In this way, we deal with  a basic but typical example of a second order parabolic SPDE that is forced to vanish on the boundary of a domain which is smooth except at one point, the vertex $x_0=0$. 
The behaviour of the solution at this isolated singularity of $\partial\cD$ is our main interest. 

Our main result, Theorem~\ref{thm stochastic part}, is a  weighted $L_p$-estimate of the corresponding stochastic convolution ($p\geq 2$). It can be summarised as follows. Let $G$ be the Green function associated with the heat equation with zero Dirichlet boundary condition on $\cD$. Then, for suitable $\ell_2$-valued functions $g=g(\omega,t,x)$, $(\omega,t,x)\in\Omega\times(0,T]\times\cD$, the stochastic convolution 
\[
w(t,x):=\sum_{k=1}^\infty\int_0^t\int_\cD G(t-s,x,y) g^k(s,y)\,\dy\,\dw^k_s,
\qquad (t,x)\in(0,T]\times\cD,
\]
fulfils the estimate
\begin{equation}\label{eq:mainEst_Intro}
\E
\int^T_0
\int_{\cD}
\Abs{\abs{x}^{-1}w(t,x)}^p\, \abs{x}^{\theta-2} \,\dx\,\dt 
\leq
N\,
\E 
\int^T_0
\int_{\cD}
\Abs{g(t,x)}^p_{\ell_2}\,\abs{x}^{\theta-2} \,\dx\,\dt,
\end{equation}
provided the right hand side is finite and the weight parameter $\theta$ satisfies 
\begin{equation}\label{eq:range:vertex}
p \left(1-\frac{\pi}{\kappa_0}\right)<\theta<p\left(1+\frac{\pi}{\kappa_0}\right).
\end{equation}
We also show how to use this estimate to obtain weighted $L_p$-Sobolev regularity of order one  for the solution to the stochastic heat equation~\eqref{eq:SHE_Intro} with zero Dirichlet boundary condition.
In order to capture the singular behaviour of the solution at the vertex, we use a weight system based on the distance $\rho_o(x):=\abs{x}$ of a point $x\in\cD$ to the origin, i.e., to the vertex of $\cD$. 
Our main estimate~\eqref{eq:mainEst_Intro} is a stochastic version of the corresponding result for the deterministic convolution proven in~\cite{Naz2001} and almost simultaneously in~\cite{Sol2001}.
Its proof is based on a Green function estimate from~\cite{Koz1991}.

It is worth mentioning that the lower bound in~\eqref{eq:range:vertex}, which corresponds to the best integrability property of the solution near the vertex, is sharp. 
This can be seen from the following example: 
Let $\beta_t$  be a one-dimensional Brownian motion defined on a  probability space $\Omega$ and consider $\displaystyle u(\omega,t,x)=r^{\frac{\pi}{\kappa_0}}\sin\ssgrklam{\frac{\pi}{\kappa_0}\vartheta}\, \beta_t(\omega)=:g(x)\,\beta_t(\omega)$, where $x=(r\cos\vartheta,r\sin\vartheta)$. 
Since $g$ is harmonic on $\cD$ and vanishes on $\partial\cD$, $u$ satisfies
\begin{equation*}
\left.
\begin{alignedat}{3}
\du 
&= 
\Delta && u \,\dt
+
g\,\mathrm{d}\beta_t \quad \text{on } \Omega\times(0,T]\times\cD,	\\
u
&=
0 && \quad \text{on } \Omega\times(0,T]\times\partial\cD,	\\
u(0)
&=
0 && \quad \text{on } \Omega\times\cD.
\end{alignedat}
\right\}	
\end{equation*}
We are interested in  the finiteness of
\[
\E\int^T_0\int_{\mathcal{D}\cap B_{\varepsilon}} \Abs{\abs{x}^{-1}u(t,x)}^p \abs{x}^{\theta-2}\,\dx\,\dt,
\]
where  $B_{\varepsilon}$ is the open ball centred at the origin with a fixed radius $\varepsilon>0$. This integral equals
\begin{equation*}
\E\int^T_0\int^{\kappa_0}_0\int^{\varepsilon}_0\left\lvert r^{-1}r^{\frac{\pi}{\kappa_0}}\sin\left(\frac{\pi}{\kappa_0}\vartheta\right)\right\rvert^p r^{\theta-2}r\,\dr\,\dl\vartheta\, \abs{\beta_t}^p\,\dt
=
N\int^{\varepsilon}_0 r^{\nrklam{\frac{\pi}{\kappa_0}-1}p+\theta-1} \,\dr,
\end{equation*}
where 
$N=N(T,\kappa_0,p)\in (0,\infty)$.
Thus, it is finite if, and only if, the first inequality in~\eqref{eq:range:vertex} holds. 

So far, the predominant part of the literature on the regularity of SPDEs focuses on  rather smooth domains such as $\cont^1$-domains. In this case, the only source for spatial singularities of the solution---except, of course, the regularity of the data of the equation---is the incompatibility between the roughness of the noise and the imposed boundary conditions already mentioned above. 
This effect can be captured very accurately by using Sobolev spaces with weights that are appropriate powers of the distance $\rho_\domain(x):=\mathrm{dist}(x,\partial\domain)$ of a point $x\in\domain$ to the boundary $\partial \domain$ of the domain $\domain\subset\bR^d$.
Typically, the solution $u$ fulfils
\begin{equation}\label{eq:wSob:bdry:fin}
\E\int_0^T
\rrklam{
\sum_{\abs{\alpha}\leq n}
\int_\domain
\Abs{\rho_{\domain}^{\abs{\alpha}-1}(x) D^\alpha u(t,x)}^{p} \rho_\domain^{\theta-d}(x)\,\dx}\,\dt
 <\infty,
\end{equation}
where the order of differentiability $n\in\bN$ depends on the regularity of the data of the equation. 
The free terms in the deterministic and in the stochastic part, i.e., $f$ and $g=(g^k)$ in \eqref{eq:SHE_Intro}, have to be in the corresponding weighted Sobolev spaces of order $n-2$ and $n-1$, respectively, with suitably chosen weights.
The most far reaching results in this direction have been achieved within the analytic approach to SPDEs by N.V.~Krylov and collaborators: first as an $L_2$-theory for smooth domains $\domain$ (see \cite{Kry1994}), then as an $L_p$-theory ($p\geq2$) for the half space (\cite{KryLot1999, KryLot1999b}), and afterwards for general smooth domains with at least $\cont^1$-continuous boundary (\cite{Kim2004, KimKry2004}).
For instance, for the stochastic heat equation \eqref{eq:SHE_Intro} with zero Dirichlet boundary condition on a bounded  $\cont^1$-domain $\domain\subset\bR^d$ instead of $\cD$, a weighted $L_p$-theory that guarantees the existence of a solution fulfilling \eqref{eq:wSob:bdry:fin} can be established for weight parameters $\theta$ satisfying 
\begin{equation}\label{eq:range:C1}
d-1< \theta < d+p-1.
\end{equation}
This range of weight parameters cannot be kept up if we abandon the smoothness assumption for the boundary of the underlying domain $\domain\subset\bR^d$. 
This is due to the bad influence of the boundary singularities on the regularity of the solution mentioned above, see, e.g., \cite[Example~2.17]{Kim2014} for a typical counterexample.
What we know so far is that if we only assume $\domain$ to admit Hardy's inequality (every bounded Lipschitz domain does the job), then we obtain a similar theory for $\theta$ fulfilling the much more restrictive condition
\[
d+p-2-\varepsilon < \theta < d+p-2+\varepsilon,
\]
with a small $\varepsilon>0$, see \cite{Kim2014}. 
Thus, there is a big gap between the weighted $L_p$-theories available for stochastic equations on smooth domains and the ones on very rough domains. 
Not much is known about the situation in between, i.e., for SPDEs  on, e.g., polygonal or polyhedral domains. 
These types of domains are not $\cont^1$ but at the same time they are far from being ``very rough''. 
To the best of our knowledge, a refined analysis of the dependence of the admissible range of parameters on the ``strength'' of the boundary singularities is yet to be done.
Our results are a first step into this direction. 
Here we concentrate on the behaviour of the solution at the singularity of the underlying domain. Therefore, our weights are, at the moment, powers of the distance to the vertex and not of the distance to the boundary.
This is why we cannot go beyond weighted Sobolev regularity of order one. 
To obtain higher order regularities, we have to incorporate the distance to the boundary into our weight system in order to capture also the singularities that appear due to the incompatibility of the noise and  the zero boundary  condition. 
To keep the length of our exposition at a reasonable level, we postpone this analysis to a forthcoming paper.

The lower bound of the admissible range~\eqref{eq:range:vertex} is not only sharp for our weight system based on the distance to the vertex. At the same time, it seems to be the best we can expect if we go back to the analogue weight system based on the distance to the boundary,  which is usually used within the analytic approach to SPDEs. 
This can be deduced from the work of Grisvard for deterministic equations on polygonal domains, see \cite{Gri1985, Gri1992, Gri1995}. 
In~\cite{Gri1985}, he considers the (deterministic) Poisson equation $\Delta u = f$ with zero Dirichlet boundary condition on a bounded polygonal domain $\domain \subset\mathbb{R}^2$. 
He shows that if $\kappa_0$ is the maximal interior angle among all vertices of the domain and if $f$ belongs to $L_p(\cO)$ for some $p>1$, then the solution $u$ belongs to the Sobolev-Slobodeckij space $W^{s}_p(\cO)$ for any $s<\min\sggklam{2,\frac2p+\frac{\pi}{\kappa_0}}$, but in general $u$ does not have $L_p$-Sobolev regularity of order  $s\geq\min\sggklam{2,\frac2p+\frac{\pi}{\kappa_0}}$. We note that the Sobolev regularity of $u$ changes continuously according to the maximal angle $\kappa_0$ of the polygon. 
For instance, if $\kappa_0$ is close to $2\pi$, the regularity of $u$ gets worse and we cannot expect $u$ to be in the space $W^{\frac2p+\frac12+\varepsilon}_p(\cO)$ for any $\varepsilon>0$. 
An analogous result holds for the deterministic heat equation, restricting the spatial $L_p$-Sobolev regularity of the solution accordingly, see, e.g., \cite[Section~5.2]{Gri1992} and \cite[Section~8]{Gri1995}. 
For $p=2$, it has recently been extended to the case of a semilinear stochastic heat equation in \cite{Lin2014}, where a framework of Sobolev spaces without weights and with possibly negative orders of smoothness in time has been used. 
In order to relate these results to our problem, we note that if $p\geq2$ and the inner integral $\sum\int_\domain\ldots\dx$ in \eqref{eq:wSob:bdry:fin} with $d=2$ and $n\geq2$ is finite, then $u(t,\cdot)$ belongs to $W^{1+\frac{2-\theta}p}_p(\cO)$. This follows from the embedding result in \cite[Lemma~6.6]{CioKimLee+2013}, see also~\cite[Proposition~4.1]{Cio2013}. 
Therefore, $\theta$ has to fulfil 
\begin{equation}\label{eq:bound_Grisvard}
1+\frac{2-\theta}{p}<\frac2p+\frac{\pi}{\kappa_0} \quad\quad\textrm{or}\quad\quad  p\left(1-\frac{\pi}{\kappa_0}\right)<\theta,
\end{equation}
which is precisely the lower bound in~\eqref{eq:range:vertex}.
We observe that, if $\kappa_0<\pi$, i.e., if the underlying domain is convex, then this is not a further restriction to the range \eqref{eq:range:C1} of $\theta$ for $\cont^1$ domains. 

\smallskip

This paper is organized as follows. In Section~\ref{sec:Estimate} we prove our main estimate~\eqref{eq:mainEst_Intro} for the stochastic convolution (Theorem~\ref{thm stochastic part}). In Section~\ref{sec:WSobReg} we use this estimate to develop a refined weighted $L_p$-Sobolev theory for Equation~\eqref{eq:SHE_Intro}. Before we start, we fix some notation.

\smallskip

\noindent\textbf{Notation.}
Let $(X,\mathcal{A},\nu)$ be a $\sigma$-finite measure space and $\mathcal{G}\subseteq\mathcal{A}$ be a sub-$\sigma$-algebra of $\mathcal{A}$. We write $\mathcal{G}^\nu$ for the completion of $\mathcal{G}$ with respect to $(\mathcal{A},\nu)$, i.e., $\mathcal{G}^\nu$ consists of all $A\subseteq X$ such that there exists a set $B\in\mathcal{G}$ with $\one_A=\one_B$ $\nu$-almost everywhere.
Let $(E,\nnrm{\cdot}{E})$ be a Banach space.
For $p\in[1,\infty)$, we denote by $L_p(X,\mathcal{G},\nu;E)$ the space of all (equivalence classes of) $\mathcal{G}^\nu/\cB(E)$-measurable functions $f\colon X\to E$, such that
\[
\nnrm{g}{L_p(E)}:=\ssgrklam{\int_X \nnrm{f}{E}^p\,\mathrm{d}\nu}^{1/p}<\infty;
\]
$\cB(E)$ denotes the Borel $\sigma$-algebra generated by the standard topology on $E$. 
For the special case that $\nu=\sum_{j\in\bN}\delta_j$ is the counting measure on $\bN$, we write $\ell_p:=L_p(\bN,2^{\bN},\sum_{j\in\bN}\delta_j;\bR)$ for the space of $p$-summable real-valued sequences.
If $p=\infty$, then $L_\infty(X,\mathcal{A},\nu;E)$ is the usual space of all (equivalence classes of) $\mathcal{G}^\nu/\cB(E)$-measurable and essentially bounded functions $f\colon X\to E$, the norm being given by the essential supremum.
Let $\domain$ be an arbitrary domain in $\bR^d$, $d\geq 2$. If nothing else  is stated, all (generalized) scalar-valued functions are meant to be real-valued.
If $\nu$ is a measure on $(\domain,\cB(\domain))$ with density $f\colon\domain\to \bR$ with respect to the corresponding Lebesgue measure, then we write $\nu=f\,\dx$.
For $\alpha\in\bN_0^d$, we write $D^\alpha f$ for the $\alpha$-th generalized derivative of a generalized function $f$.  
As usual, $\Delta f :=\sum_{i=1}^d D^{2\mathrm{e}_i}f$, where $\mathrm{e}_i$ is the $i$-th unit vector in $\bR^d$ for  $i=1,\ldots,d$.
We also use the notation $f_{x^{i}}:=D^{\mathrm{e}_i}$ and $f_x:=(f_{x^1},\ldots,f_{x^d})$, as well as $\nrm{f_x}:=\sum_{i=1}^d\nrm{f_{x^{i}}}$, if  $\nrm{\cdot}$ is a suitable norm.  
For $p\in(1,\infty)$ and $s\geq 0$, we write $W^s_p(\domain)$ for the classical $L_p$-Sobolev-Slobodeckij space of order $s$, cf., e.g., \cite[Section~2.3.1]{Cio2013}.
The class of infinitely differentiable functions with compact support in $\domain$ is abbreviated by $\cont^\infty_0(\domain)$ and $\mathring{W}^s_p(\domain)$ is short for its closure in $W^s_p(\domain)$.
The application of a generalized function $f$ to a test function $\varphi\in\cont^\infty_0(\domain)$ is denoted by $(f,\varphi):=f(\varphi)$.
Throughout the paper, the letter $N$ is used to denote a finite positive constant that may differ from one appearance to another, even in the same chain of inequalities.


\mysection{Main estimate}\label{sec:Estimate}

\noindent The following setting is used throughout this paper: Let $(\Omega,\cF,\prob)$ be a complete probability space, and $\nrklam{\cF_{t}}_{t\geq0}$ be an increasing filtration of $\sigma$-fields $\cF_{t}\subset\cF$, each of which contains all $(\cF,\prob)$-null sets;
$\E$ denotes the expectation operator. 
We assume that on $\Omega$ we are given a family $(w_t^k)_{t\geq0}$, $k\in\bN$, of independent one-dimensional Wiener processes relative to $\nrklam{\cF_{t}}_{t\geq0}$. We fix $T\in(0,\infty)$ and denote by $\cP_T$ the predictable $\sigma$-field on $\Omega_T:=\Omega\times (0,T]$ generated by $\nrklam{\cF_{t}}_{t\geq0}$. 
Finally, we fix an arbitrary  angle $\kappa_0\in (0,2\pi)$ and consider the domain $\cD=\cD_{\kappa_0}\subset\bR^2$ given by~\eqref{eq:defD}.

In this section, we prove for suitable $\ell_2$-valued functions $g=(g^k)_{k\in\bN}$ on $\Omega_T\times\cD$ an $L_p$-estimate for the stochastic convolution
\begin{align}\label{eq:def:conv:stoch}
w(t,x)&:=\sum_{k=1}^\infty\int^t_0\int_{\mathcal{D}}G(t-s,x,y)g^k(s,y)\,\dy\, \dw_s^k, \quad (t,x)\in  (0,T]\times\cD.
\end{align}
Here, $G(t,x,y)=G_{\kappa_0}(t,x,y)$ is the Green function for the heat equation on $\cD=\cD_{\kappa_0}$ with zero Dirichlet boundary condition, defined for every $y\in\cD$ as the solution  to the problem
\begin{align*}
&\frac{\partial G(t,x,y)}{\partial t}-\Delta_x G(t,x,y)=\delta_{(0,y)}(t,x)\quad\text{in }\bR\times\cD,\\
&G(t,x,y)=0\quad\text{for }t\in\bR,\,x\in\partial\cD\setminus\{0\},\qquad G(t,x,y)=0\quad\text{for }t<0,
\end{align*}
cf., e.g., \cite[Section~1]{KozRos2012b}. The first equality above is understood in the sense of distributions. 
Given that $g$ is predictable (i.e., $\cP_T\otimes\cB(\cD)$-measurable) and satisfies a suitable integrability condition, 
the infinite sum of stochastic integrals in \eqref{eq:def:conv:stoch} is well-defined for all $t\in(0,T]$ and almost all $x\in\cD$, and we can always find a (pseudo-)predictable version of $w$, see Remark~\ref{rem:existence_stoch_conv} below for details.

\smallskip

These conventions at hand, we can state our main result.

\begin{thm}\label{thm stochastic part}
Let $p\in[2,\infty)$ and let $\theta$ satisfy \eqref{eq:range:vertex}, i.e., assume that
\begin{equation*}
p \left(1-\frac{\pi}{\kappa_0}\right)<\theta<p\left(1+\frac{\pi}{\kappa_0}\right).
\end{equation*}
Then   there exists a constant $N\in(0,\infty)$, depending only on  $p$ and $\theta$ (and not on~$T$), such that 
\begin{equation}\label{result stochastic part}
\E
\int^T_0
\int_{\cD}
\Abs{\abs{x}^{-1}w(t,x)}^p\, \abs{x}^{\theta-2} \,\dx\,\dt 
\leq
N\,
\E 
\int^T_0
\int_{\cD}
\Abs{g(t,x)}^p_{\ell_2}\,\abs{x}^{\theta-2} \,\dx\,\dt,
\end{equation}
for any $\cP_T\otimes\cB(\cD)$-measurable $\ell_2$-valued function $g=g(\omega,t,x)$ with the right hand side of \eqref{result stochastic part} finite.
\end{thm}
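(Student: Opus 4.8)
The plan is to reduce the weighted $L_p$-estimate to a pointwise bound on the Green function of the type available in \cite{Koz1991}, and then to run a Burkholder--Davis--Gundy / Minkowski-type argument adapted to the weight $|x|^{\theta-2}$. First I would recall (or record as a lemma) the relevant two-sided Gaussian-type estimate for $G_{\kappa_0}(t,x,y)$ on the angular domain: it should say that $G(t,x,y)$ is comparable, up to constants depending only on $\kappa_0$, to a Gaussian kernel $t^{-1}e^{-c|x-y|^2/t}$ multiplied by factors of the form $\bigl(\tfrac{|x|}{\sqrt t}\wedge 1\bigr)^{\pi/\kappa_0}\bigl(\tfrac{|y|}{\sqrt t}\wedge 1\bigr)^{\pi/\kappa_0}$ that encode the vanishing at the vertex with exponent $\pi/\kappa_0$. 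The singular exponent $\pi/\kappa_0$ is exactly what produces the bounds in \eqref{eq:range:vertex}, so keeping track of it is essential.

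Next, by the stochastic Fubini theorem and the Burkholder--Davis--Gundy inequality applied to the martingale $s\mapsto\sum_k\int_0^s\int_\cD G(t-r,x,y)g^k(r,y)\,\dy\,\dw^k_r$, I would bound, for fixed $(t,x)$,
\[
\E\,\bigl||x|^{-1}w(t,x)\bigr|^p
\le
N\,\E\,\ssgrklam{\int_0^t\ssgrklam{\int_\cD |x|^{-1}G(t-s,x,y)\,|g(s,y)|_{\ell_2}\,\dy}^{2}\ds}^{p/2}.
\]
Now the key idea is to view $y\mapsto |x|^{-1}G(t-s,x,y)\,\dy$ as (essentially) a sub-probability kernel after suitable normalisation, so that one can apply Jensen/Hölder in the spatial variable against it, integrate against $|x|^{\theta-2}\,\dx$, and then apply Fubini to move the $\dx$-integration inside. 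This requires a uniform bound of the form
\[
\int_\cD \bigl(|x|^{-1}G(t-s,x,y)\bigr)\,|x|^{\theta-2}\,\dx \le N\, |y|^{\theta-2}
\quad\text{and the symmetric bound in }y,
\]
which is precisely where the Green function estimate together with the constraint \eqref{eq:range:vertex} enters: the exponent $\pi/\kappa_0$ in the vertex factor must beat the negative powers $|x|^{-1}$ and $|x|^{\theta-2-d}$ near the origin (giving the lower bound on $\theta$) and, by the symmetric estimate in $y$, the exponent $\theta-2$ must not be too large (giving the upper bound). I would isolate this as a Schur-type testing lemma: show that the operator with kernel $|x|^{-1}G(t-s,x,y)$, acting between the weighted spaces $L_p(\cD,|x|^{\theta-2}\dx)$, has, after integrating out the time lag $t-s$ over $(0,T)$ and the square-root in the BDG term, a norm bounded independently of $T$ (scaling $|x|,|y|\mapsto\lambda|x|,\lambda|y|$, $t\mapsto\lambda^2 t$ shows $T$-independence).

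Assembling the pieces: combine the BDG bound, the stochastic Fubini, Minkowski's integral inequality to pull the $L_{p/2}(\Omega\times(0,T);\,\dt)$-norm inside the $\ds$ and $\dy$ integrals, the Schur estimate on both sides of the kernel (using $p\ge 2$ so that the $\ell_p(\dy)$ and $\ell_2(\ds)$ summations cooperate via Hölder), and finally a change of order of integration to land on the right-hand side of \eqref{result stochastic part}. The main obstacle I anticipate is the careful verification of the Schur-type integral bound with the \emph{right} two parameters: one must check that the Gaussian part contributes only a harmless $t$-power after integrating over $\cD$, that the vertex-vanishing factors produce convergent integrals near $0$ precisely under \eqref{eq:range:vertex} and not just under a cruder sufficient condition, and that the integration over the time lag in $(0,T)$ does not reintroduce a $T$-dependence — this forces one to exploit the full Gaussian decay in $|x-y|^2/t$ rather than a crude bound, and to split the spatial integral into the regions $|x|\lesssim\sqrt{t-s}$ and $|x|\gtrsim\sqrt{t-s}$ where the vertex factor behaves differently. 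The rest is bookkeeping with Hölder's inequality and Fubini.
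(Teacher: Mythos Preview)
Your overall plan---Kozlov's Green function estimate, then BDG, then weighted kernel bounds---is exactly the paper's framework, and you correctly identify the role of the exponent $\pi/\kappa_0$. But the step where you pass to a Schur-type test has a genuine gap.

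First, the displayed bound
\[
\int_\cD |x|^{-1}G(t-s,x,y)\,|x|^{\theta-2}\,\dx \le N\,|y|^{\theta-2}
\]
cannot hold uniformly in $t-s$: as $t-s\to 0$ the kernel concentrates to $\delta_y$ away from the vertex and the left side becomes $|y|^{\theta-3}$, off by a full power of $|y|$. Relatedly, $y\mapsto |x|^{-1}G(t-s,x,y)$ is \emph{not} a sub-probability kernel after any normalisation independent of $x$; its total mass is $\le |x|^{-1}$, which blows up near the vertex. So the Jensen step you sketch does not go through as written.

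Second, even the natural repair---use the honest sub-probability kernel $G(t-s,x,\cdot)$ for Jensen, then Fubini in $(t,x)$ and integrate out the time lag---yields only a strict subset of the claimed range. For $p=2$, Jensen gives $(\int G|g|\,\dy)^2\le\int G|g|^2\,\dy$, and after Fubini one needs $\int_0^\infty\!\int_\cD|x|^{\theta-4}G(\tau,x,y)\,\dx\,\mathrm d\tau\le N|y|^{\theta-2}$; carrying this out with the Kozlov bound gives only $2-\pi/\kappa_0<\theta<2+\pi/\kappa_0$, exactly half of the correct interval $2-2\pi/\kappa_0<\theta<2+2\pi/\kappa_0$. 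The reason is that the step $\int G\,\dy\le 1$ throws away one full copy of the vertex factor $R_{x}^\lambda R_{y}^\lambda$, and that lost decay is precisely what produces the sharp endpoints.

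What the paper does instead is a double H\"older splitting with two free parameters $\alpha,\beta$. After BDG one applies H\"older in $y$ with exponents $(p,p')$, distributing the vertex factors as $R_x^{\alpha p}R_y^{\beta p}$ to the piece carrying $|g|^p$ (call it $I_{1,p}$) and $R_x^{(\lambda-\alpha)p'}R_y^{(\lambda-\beta)p'}$ together with the weights $|x|^{\mu-2}$, $|y|^{1-\mu}$ to a $g$-free piece $I_{2,p}$. A second H\"older in $s$ with exponents $(p/2,\,p/(p-2))$ separates them completely. The core computation is then the pointwise bound
\[
\gnnrm{I_{2,p}(t,\cdot,x)}{L_{2(p-1)/(p-2)}(0,t)}^{\,p-1}\le N\,|x|^{-2}
\]
(note the $|x|^{-2}$, not $|x|^{-1}$), followed by a dual bound showing that $\int_s^T\!\int_{\bR^2}e^{-|x-y|^2/(t-s)}R_x^{\alpha p}R_y^{\beta p}|x|^{-2}(t-s)^{-1}\,\dx\,\dt$ is bounded uniformly in $(s,y)$. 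Both integrals are finite precisely when one can choose $0<\alpha<\mu+\lambda-2/p'$ and $0<\beta<-\mu+\lambda+2/p'$, and the non-emptiness of this box is exactly equivalent to \eqref{eq:range:vertex}. Your proposal does not contain this splitting; without it the argument either fails at the Schur step or produces a non-sharp range of~$\theta$.
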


\begin{remark}
As mentioned in the introduction, Theorem~\ref{thm stochastic part} can be considered as a stochastic version of the corresponding deterministic result in \cite{Naz2001}, see also~\cite{Sol2001}, which states that for $p>1$ and $\mu$ satisfying
\begin{equation}\label{condition2}
2\left(1-\frac{1}{p}\right)-\frac\pi{\kappa_0}<\mu<2\left(1-\frac{1}{p}\right)+\frac\pi{\kappa_0},
\end{equation}
the integral operator $\mathcal{G}_1$ with kernel
\[
\frac{\abs{x}^{\mu-2}}{\abs{y}^{\mu\phantom{-2}}}{\one}_{t-s>0} \;G_{\kappa_0}(t-s,x,y)
\]
is bounded in $L_p((0,T)\times\cD)$. In other words,
the deterministic convolution
\begin{equation}\label{eq:def:conv:det}
v(t,x):=\int^t_0\int_{\mathcal{D}}G(t-s,x,y)f(s,y)\,\dy\,\ds
\end{equation}
satisfies the estimate
\begin{equation}\label{result deterministic part}
\int^T_0
\int_\cD
\Abs{\abs{x}^{-1}v(t,x)}^p \abs{x}^{\theta-2} \,\dx\,\dt 
\leq N 
\int^T_0
\int_\cD
\Abs{\abs{x}^1f(t,x)}^p \abs{x}^{\theta-2}\,\dx\,\dt, 
\end{equation}
whenever the right hand side of \eqref{result deterministic part} is finite and $\theta:=p(\mu-1)+2$ fulfils~\eqref{eq:range:vertex}. 
We refer to \cite[Theorem~1.1 and Lemma~2.1]{Naz2001} for details, compare also \cite[Theorem~1.2]{Sol2001}.
\end{remark}

The proof of Theorem~\ref{thm stochastic part} is based on the following Green function estimate for $G=G_{\kappa_0}$ taken from \cite{Koz1991}, compare also \cite{Naz2001}.
We use the notation 
\[
R_{x,t}:=\frac{|x|}{|x|+|y|+\sqrt{t}} \quad\text{and}\quad  R_{y,t}:=\frac{|y|}{|x|+|y|+\sqrt{t}}.
\]
\begin{thm}[\cite{Koz1991}]\label{thm:green:estimate}
For any  $0<\lambda<\displaystyle\frac{\pi}{\kappa_0}$, there exist $\sigma>0$ and $N>0$ such that
\begin{equation}\label{Green}
\Abs{G_{\kappa_0}(t,x,y)}
\leq 
\frac{N}{t}
e^{-\sigma\frac{\abs{x-y}^2}{t}} R^{\lambda}_{x,t}\,R^{\lambda}_{y,t}
\end{equation}
for all $t>0$.
\end{thm}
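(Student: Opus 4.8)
First I would exploit the dilation invariance of the cone $\cD_{\kappa_0}$. Since $\cD$ is invariant under $x\mapsto cx$ for every $c>0$ and the defining problem for $G$ transforms accordingly, the Green function is parabolically homogeneous of degree $-2$, i.e. $G_{\kappa_0}(t,x,y)=c^2\,G_{\kappa_0}(c^2t,cx,cy)$ for all $c>0$. The three scale-invariant quantities $R_{x,t}$, $R_{y,t}$ and $|x-y|^2/t$ are unchanged under $(t,x,y)\mapsto(c^2t,cx,cy)$, so both sides of \eqref{Green} are homogeneous of degree $-2$ and the estimate is scale invariant. Choosing $c=(|x|+|y|+\sqrt t)^{-1}$ I would normalize to $|x|+|y|+\sqrt t=1$; then $R_{x,t}=|x|$ and $R_{y,t}=|y|$, and the remaining parameters range over a compact set, which is the natural setting for the asymptotic analysis below.

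Next I would write $G$ explicitly by separation of variables in polar coordinates $x=(r\cos\vartheta,r\sin\vartheta)$, $y=(\rho\cos\varphi,\rho\sin\varphi)$. The Dirichlet Laplacian on the wedge has angular eigenfunctions $\sin(\nu_k\vartheta)$ with $\nu_k:=k\pi/\kappa_0$, $k\in\bN$, and the associated radial (Bessel) heat kernels are explicit, yielding
\[
G_{\kappa_0}(t,x,y)=\frac{1}{\kappa_0\,t}\,e^{-(r^2+\rho^2)/(4t)}\sum_{k=1}^\infty \sin(\nu_k\vartheta)\,\sin(\nu_k\varphi)\,I_{\nu_k}\!\Big(\frac{r\rho}{2t}\Big),
\]
where $I_\nu$ is the modified Bessel function of the first kind. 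The whole estimate then reduces to controlling this series. Writing $z:=r\rho/(2t)$ and $a:=(r^2+\rho^2)/(4t)$, I record the two elementary identities $a-z=(r-\rho)^2/(4t)$ and $|x-y|^2/(4t)=a-z\cos(\vartheta-\varphi)$, which convert the prefactor $e^{-a}$ into the off-diagonal Gaussian $e^{-\sigma|x-y|^2/t}$.

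I would then split according to the size of $z$. In the regime $z\le 1$ (equivalently $r\rho\le 2t$), the classical bound $I_\nu(z)\le (z/2)^\nu e^z/\Gamma(\nu+1)$ together with $\nu_k=k\nu_1$ shows the series is dominated by its $k=1$ term, giving $\big|\sum_k\cdots\big|\lesssim z^{\nu_1}=(r\rho/2t)^{\pi/\kappa_0}$. Since here $a\ge z$ forces $e^{-a}\le e^{-|x-y|^2/(8t)}$, the Gaussian is already stronger than required, and its surplus decay absorbs both the passage from the sharp power $\pi/\kappa_0$ to any $\lambda<\pi/\kappa_0$ and the polynomial prefactor needed to rewrite $z^{\nu_1}$ as $R_{x,t}^\lambda R_{y,t}^\lambda$; this is a routine Gaussian-versus-polynomial estimate on the normalized compact parameter set. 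In the regime $z>1$ I would use the large-argument asymptotics $I_\nu(z)\sim e^z/\sqrt{2\pi z}$, so that the decisive cancellation $e^{-a}\,e^{z}=e^{-(r-\rho)^2/(4t)}$ produces the correct Gaussian, while the boundary factor $R_{x,t}^\lambda R_{y,t}^\lambda$ is recovered from the angular factors $\sin(\nu_k\vartheta)\sin(\nu_k\varphi)$ encoding the Dirichlet vanishing.

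The main obstacle is precisely the angular Bessel series in the regime $z>1$: each term behaves like $e^z$, so a naive bound diverges, and convergence must be extracted from the decay of $I_{\nu_k}(z)$ in the order $\nu_k$. The key is a uniform-in-$\nu$ bound exhibiting the transition of $I_\nu(z)$ near $\nu\approx z$ (so that effectively only $O(z)$ terms contribute, each of size $\lesssim e^z/\sqrt z$), which is the delicate estimate carried out in \cite{Koz1991}. The strict inequalities $\lambda<\pi/\kappa_0$ and $\sigma<1/4$ are not incidental: they supply exactly the exponential and power-law slack needed to absorb the subleading terms $k\ge 2$, the $\sqrt z$-type prefactors from the summation, and the oscillatory angular factors, thereby collapsing the whole series into the single clean bound \eqref{Green}.
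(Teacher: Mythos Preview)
The paper does not prove this theorem at all: it is stated with the citation \cite{Koz1991} and used as a black box in the proof of Theorem~\ref{thm stochastic part}. There is therefore no proof in the paper to compare your proposal against.

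Your outline is a reasonable sketch of the standard route to such an estimate on a planar wedge---the eigenfunction expansion in terms of $\sin(\nu_k\vartheta)\,I_{\nu_k}(r\rho/2t)$ is classical, the parabolic scaling reduction is correct, and the split according to $z=r\rho/(2t)$ isolates the genuinely hard part. You also identify correctly that the crux is the uniform-in-$\nu$ control of the Bessel series for $z>1$, and you defer precisely that step back to \cite{Koz1991}. In that sense your proposal is not a self-contained proof but a roadmap that ends at the same reference the paper cites. If the intent was to supply an independent argument, the missing ingredient is exactly the quantitative transition estimate for $I_\nu(z)$ across $\nu\sim z$ that makes the series converge with the right Gaussian and boundary factors; without it the large-$z$ regime remains unproved.
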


Using this estimate, we are able to prove our main result in the following way.

\begin{proof}[Proof of Theorem \ref{thm stochastic part}] 
Fix $\theta$ as in~\eqref{eq:range:vertex} and put $\mu:=1+(\theta-2)/p$, so that $\mu$ satisfies~\eqref{condition2}.
Set $\lambda:=\pi/\kappa_0-\varepsilon$ with $\varepsilon>0$ small enough so that
\begin{equation}\label{condition_15.12.2015_1}
2\left(1-\frac{1}{p}\right)-\lambda<\mu<2\left(1-\frac{1}{p}\right)+\lambda.
\end{equation}
Without loss of generality, we assume that the estimate \eqref{Green} is fulfilled with $\sigma=1$ and $N=1$. This is not a restriction,  since different values of $\sigma=\sigma(\lambda)>0$ and $N=N(\lambda)>0$ in \eqref{Green} only result in a possibly different constant $N$ in \eqref{result stochastic part}.  
For the sake of clarity, we split our proof into three steps.

\smallskip

\noindent\textbf{Step 1.}
We have
\begin{equation*}
\abs{x}^{\mu-2}w(t,x)
=
\sum_{k=1}^\infty \int^{t}_0\int_\cD \abs{x}^{\mu-2}\frac{G(t-s, x,y)}{\abs{y}^{\mu-1}}\,\abs{y}^{\mu-1} g^k(s,y)\,\dy\,\dw^k_s.
\end{equation*}
Let us denote $h(s,y):=\abs{y}^{\mu-1}g(s,y)$, so that assuming that the right hand side of~\eqref{result stochastic part} is finite means that $\E\nnrm{h}{L_p((0,T)\times \mathcal{D};\ell_2)}^p<\infty$.
If we set 
\begin{equation}\label{operator  stochastic}
\mathcal{G}_2h(t,x)
:=
\sum_{k=1}^\infty\int^{t}_0\int_\cD  \abs{x}^{\mu-2}\frac{G(t-s, x,y)}{\abs{y}^{\mu-1}}\,h^k(s,y)\,\dy\,\dw^k_s,
\end{equation}
then the Burkholder-Davis-Gundy inequality and the triangle inequality for $\ell_2$-valued integrals, yield 
\begin{align*}
\E
\Abs{\abs{x}^{\mu-2}w(t,x)}^p
&=
\E
\Abs{\mathcal{G}_2h(t,x)}^p\\
&\leq N\, 
\E\reklam{\int_0^t \sum_{k=1}^\infty \ssgrklam{
\int_\cD \abs{x}^{\mu-2} \frac{G(t-s, x,y)}{\abs{y}^{\mu-1}}\,h^k(s,y)\,\dy}^2\ds}^{p/2}\\
&\leq N\,
\E\reklam{ \int^t_0 \ssgrklam{\int_{\cD}  \abs{x}^{\mu-2} \frac{\Abs{G(t-s, x,y)}}{\abs{y}^{\mu-1}}\,\abs{h(s,y)}_{\ell_2} \,\dy}^2 \ds}^{p/2},
\end{align*}
where $N$ depends only on $p$. The first inequality can be justified, e.g., by considering $\cG_2h(t,x)$ as a real-valued stochastic integral w.r.t.\ a cylindrical Wiener process on $\ell_2$ and applying the Burkholder-Davis-Gundy inequality as stated in \cite[Theorem~4.36]{DaPZab2014}; if $p=2$ it is enough to apply It\^{o}'s isometry. 
We choose $\alpha$ and $\beta$ satisfying 
\begin{equation}\label{condition 2015.06.10_2}
0<\alpha<\mu+\lambda-\frac{2}{p'}\quad\text{and}\quad 0<\beta<-\mu+\lambda+\frac{2}{p'},
\end{equation}
where $p':=p/(p-1)$ is the dual exponent of $p$, i.e., $1/p+1/p'=1$. 
Then, by Hölder's inequality for the integral on $\mathcal{D}$ and the Green function estimate~\eqref{Green}, we have
\begin{align*}
\E
\Abs{\mathcal{G}_2h(t,x)}^p
&\leq
\E\reklam{ 
\int^t_0 \ssgrklam{\int_{\mathcal{D}} e^{-\frac{|x-y|^2}{t-s\phantom{n}}}
\abs{h(s,y)}^p_{\ell_2}
\,R^{\alpha p }_{x,t-s}\,R^{\beta p }_{y,t-s}\frac{1}{t-s}\,\dy }^{2/p} \\
&\phantom{\leq
\E n}\times
\ssgrklam{\int_{\cD} e^{-\frac{\abs{x-y}^2}{t-s\phantom{n}}}\frac{\abs{x}^{(\mu-2)p'}}{\abs{y}^{(\mu-1)p' }}\,R^{(\lambda-\alpha)p' }_{x,t-s}\,R^{(\lambda-\beta)p' }_{y,t-s}\frac{1}{t-s}\,\dy}^{2/p'}\ds}^{p/2}\\
&=: 
\E
\ssgeklam{
\int^t_0
I^{2/p}_{1,p}(t,s,x) \times I^{2/p'}_{2,p}(t,s,x)\,\ds}^{p/2}, 
\end{align*}
where
\begin{align*}
I_{1,p}(t,s,x)&
:=
\int_\cD e^{-\frac{\abs{x-y}^2}{t-s\phantom{n}}}\abs{h(s,y)}^p_{\ell_2}\,R^{\alpha p }_{x,t-s}\,R^{\beta p }_{y,t-s}\frac{1}{t-s}\,\dy, \\
I_{2,p}(t,s,x)&
:=
\int_\cD e^{-\frac{\abs{x-y}^2}{t-s\phantom{n}}}\frac{\abs{x}^{(\mu-2)p'}}{\abs{y}^{(\mu-1)p' }}\,R^{(\lambda-\alpha)p' }_{x,t-s}\,R^{(\lambda-\beta)p' }_{y,t-s}\frac{1}{t-s}\,\dy.
\end{align*}
Note that, since only $h$ depends on $\omega\in\Omega$, so does $I_{1,p}$, whereas $I_{2,p}$ is purely deterministic. 
By another application of Hölder's inequality to the integral on $(0,t)$ with  H\"older conjugates $p/2\geq 1$ and $p/(p-2)$, we obtain
\begin{equation}\label{estimate 2015.12.15_1}
\E
\Abs{\mathcal{G}_2h(t,x)}^p
\leq
\E 
\ssgeklam{\int^t_0 I_1(t,s,x)\,\ds} \cdot 
\gnnrm{I_{2,p}(t,\cdot,x)}{L_{\frac{2(p-1)}{p-2}}(0,t;\bR)}^{p-1},
\end{equation}
where we use the common convention $1/0:=\infty$ for $p=2$. 

\smallskip

\noindent\textbf{Step 2.} 
We estimate the second factor on the right hand side of \eqref{estimate 2015.12.15_1} and show that
\begin{equation}\label{condition 2015.12.14_2}
\gnnrm{I_{2,p}(t,\cdot,x)}{L_{\frac{2(p-1)}{p-2}}(0,t;\bR)}^{p-1}
\leq N\,\abs{x}^{-2},
\end{equation}
with a constant $N\in(0,\infty)$ that depends only on $p$, $\lambda$, $\mu$, and $\alpha$, $\beta$ from \eqref{condition 2015.06.10_2}, but not on $t$ or $x$.
We treat the cases $p=2$ and $p>2$ separately.

\smallskip

\noindent\emph{The case $p=2$.} 
Using  the fact that ${\mathcal{D}}\subset\mathbb{R}^2$ and the change of variable $y=x-z\sqrt{t-s}$ between $y$ and $z$, we have
\begin{align*}
I_{2,2}(t,s,x)
&\leq 
\int_{\mathbb{R}^2} e^{-\abs{z}^2}\frac{\abs{x}^{2(\mu-2)}}
{\abs{x-z\sqrt{t-s}}^{2(\mu-1) }}
\cdot\frac{\abs{x}^{2(\lambda-\alpha)}}{(\abs{x}+\abs{x-z\sqrt{t-s}}+\sqrt{t-s})^{2(\lambda-\alpha)}}\\
&\phantom{\int_{\mathbb{R}^2} e^{-\abs{z}^2} \frac{\abs{x}^{2(\mu-2)}}
{\abs{x-z\sqrt{t-s}}^{2(\mu-1) }}}
\times\frac{\abs{x-z\sqrt{t-s}}^{2(\lambda-\beta)}}{(\abs{x}+\abs{x-z\sqrt{t-s}}+\sqrt{t-s})^{2(\lambda-\beta)}}\,\dz\\
&=\abs{x}^{2(\lambda-\alpha+\mu-2)}
\int_{\bR^2} e^{-\abs{z}^2}\frac{ \abs{x-z\sqrt{t-s}}^{2(\lambda-\beta-\mu+1)}}{(\abs{x}+\abs{x-z\sqrt{t-s}}+\sqrt{t-s})^{2(2\lambda-\alpha-\beta)}}\,\dz.
\end{align*}
Thus, setting $a:=2(\lambda-\alpha+\mu)$ and $b:=2(\lambda-\beta-\mu)$ and noting that $a>2$ and $b>-2$ by  \eqref{condition 2015.06.10_2} with $p=2$, we obtain
\begin{align*}
I_{2,2}(t,s,x)
&\leq
\abs{x}^{a-4}
\int_{\mathbb{R}^2} 
e^{-\abs{z}^2}\frac{ \abs{x-z\sqrt{t-s}}^{b+2}}{(\abs{x}+\abs{x-z\sqrt{t-s}}+\sqrt{t-s})^{a+b}}\,\dz\\
&=
\abs{x}^{-2}\int_{\bR^2} e^{-\abs{z}^2}\frac{ \abs{x}^{a-2} \abs{x-z\sqrt{t-s}}^{2}}{(\abs{x}+\abs{x-z\sqrt{t-s}}+\sqrt{t-s})^{(a-2)+2}}\\
&\phantom{\abs{x}^{-2}\int_{\bR^2} e^{-\abs{z}^2}}
\times
\frac{ \abs{x-z\sqrt{t-s}}^{b}}{(\abs{x}+\abs{x-z\sqrt{t-s}}+\sqrt{t-s})^{b}}\,\dz\\
&\leq 
\abs{x}^{-2}
\int_{\bR^2} e^{-\abs{z}^2}\frac{ \abs{x-z\sqrt{ t-s}}^{b}}{(\abs{x}+\abs{x-z\sqrt{t-s}}+\sqrt{ t-s})^{b}}\,\dz,
\end{align*}
since $\displaystyle\frac{ \abs{x}^{a-2} \abs{x-z\sqrt{t-s}}^{2}}{(\abs{x}+\abs{x-z\sqrt{t-s}}+\sqrt{t-s})^{(a-2)+2}}\leq 1$.
Therefore, in order to prove~\eqref{condition 2015.12.14_2} for $p=2$, it is enough to show that
\begin{equation}\label{homo 2015.06.09_1}
\sup_{c>0,\, x\in \bR^2}
\int_{\bR^2} e^{-\abs{z}^2}\frac{ \abs{x-cz}^{b}}{(\abs{x}+\abs{x-cz}+c)^{b}}\,\dz 
\leq 
N<\infty 
\end{equation}
with a proper constant $N$ which depends only on $b$. Since this is obvious for $b\geq0$, we only consider  the case $b\in (-2,0)$ and prove the following equivalent statement for this case: For arbitrary $\beta'\in (0,2)$, there exists a finite constant $N$ that might depend on $\beta'$ but not on $x\in\bR^2$, so that 
\begin{equation}\label{homo 2015.06.09_3}
\int_{\bR^2}e^{-\abs{z}^2}\frac{(\abs{x}+\abs{x-z}+1)^{\beta'}}{ \abs{x-z}^{\beta'}}\,\dz = 
\int_{\bR^2}e^{-\abs{z-x}^2}\frac{(\abs{x}+\abs{z}+1)^{\beta'}}{ \abs{z}^{\beta'}}\,\dz\leq N.
\end{equation}
We first confine to the case $\abs{x}\leq 1$ and split the integral into two parts:
\begin{align*}
\int_{\bR^2} 
e^{-\abs{z-x}^2}\frac{(\abs{x}+\abs{z}+1)^{\beta'}}{ \abs{z}^{\beta'}}\,\dz
&=
\rrklam{\int_{\abs{z}\leq 1} + \int_{\abs{z}\geq 1}} e^{-\abs{z-x}^2}\frac{(\abs{x}+\abs{z}+1)^{\beta'}}{ \abs{z}^{\beta'}}\,\dz 
\\
&=:  I_{2,2}^{(1)}+I_{2,2}^{(2)}.
\end{align*}
Since $\beta'\in(0,2)$,
\begin{equation*}
I_{2,2}^{(1)}
\leq 
\int_{\abs{z}\leq 1} e^{-\abs{z-x}^2}\frac{3^{\beta'}}{ \abs{z}^{\beta'}}\,\dz
\leq
3^{\beta'} N \int^1_0 \frac{1}{r^{\beta'}}r\, \dr=N<\infty,
\end{equation*}
where the last constant $N$ depends only on $\beta'$. Also, we can use the fact that $\frac{\abs{x}+\abs{z}+1}{ \abs{z}}\leq\frac{\abs{z}+2}{\abs{z}}\leq 3$  for $\abs{z}\geq 1$ and $\abs{x}\leq 1$, to obtain
\begin{equation*}
I_{2,2}^{(2)}
\leq 
\int_{\abs{z}\geq 1}e^{-\abs{z-x}^2} 3^{\beta'}\,\dz
\leq
 3^{\beta'}\int_{\bR^2}e^{-|z|^2}\,\dz =N <\infty;
\end{equation*}
again the last $N$ depends only on $\beta'$. Thus, the case $\abs{x}\leq 1$ is proved. Next, consider $\abs{x}\ge 1$. In this case,  we use the decomposition
\begin{align*}
&\int_{\bR^2}e^{-\abs{z-x}^2}\frac{(\abs{x}+\abs{z}+1)^{\beta'}}{ \abs{z}^{\beta'}}\,\dz\\
&\qquad\quad
\leq \rrklam{\int_{\abs{z} \leq \frac{\abs{x}}{2}}  +  \int_{|z-x|\le \frac{|x|}{2}} + \int_{|z-x|\ge\frac{|x|}{2},|z|\ge\frac{|x|}{2}}} e^{-\abs{z-x}^2}\frac{(\abs{x}+\abs{z}+1)^{\beta'}}{ \abs{z}^{\beta'}}\,\dz\\
&\qquad\quad
=:  I_{2,2}^{(3)}+I_{2,2}^{(4)}+I_{2,2}^{(5)}.
\end{align*}
To estimate $I_{2,2}^{(3)}$  we note that  $\abs{z-x}\geq \frac{\abs{x}}{2}$ if $\abs{z}\leq\frac{\abs{x}}{2}$ and $\abs{x}\geq 1$, so that 
\begin{align*}
I_{2,2}^{(3)}
&\leq \ssgrklam{\frac52 \abs{x}}^{\beta'} e^{-\frac{\abs{x}^2}{4\phantom{n}}}\int_{\abs{z}\leq \frac{\abs{x}}{2}} \frac{1}{\abs{z}^{\beta'}}\,\dz\\
&= N\,
\abs{x}^{\beta'}\abs{x}^{2-\beta'}e^{-\frac{|x|^2}{4}} 
=N\,
\abs{x}^{2}e^{-\frac{\abs{x}^2}{4}}\le N<\infty,\nonumber
\end{align*}
where the last $N$ depends only on $\beta'$.
For the estimate of $I_{2,2}^{(4)}$, we can use the fact that $\abs{z-x}\leq \frac{\abs{x}}{2}$ implies $\abs{z}\geq \frac{\abs{x}}{2}$ and $\frac{\abs{x}+\abs{z}+1}{\abs{z}}\le \frac{7\abs{x}/2}{\abs{x}/2}=7$ since $\abs{x}\geq 1$. We obtain 
\begin{equation*}
I_{2,2}^{(4)}
\leq 
5^{\beta'}
\int_{\bR^2} e^{-\abs{z-x}^2}\,\dz
\leq N
<\infty,
\end{equation*}
with $N$ depending only on $\beta'$.
Lastly, since $\abs{x}\geq 1$, we have
\begin{align*}
I_{2,2}^{(5)}
&\leq 
\frac{1}{\nrklam{\abs{x}/2}^{\beta'}}
\int_{\abs{z}\geq \frac{\abs{x}}{2}} e^{-\abs{z-x}^2}(\abs{x}+\abs{z}+1)^{\beta'}\,\dz\\
&\leq N  \ssgrklam{\frac{2}{|x|}}^{\beta'}
\int_{\abs{z}\geq\frac{\abs{x}}{2}}e^{-\abs{z-x}^2}\sgrklam{\abs{x}^{\beta'}+\abs{(z-x)+x}^{\beta'}+1}\,\dz\\
&\leq  N   \ssgrklam{\frac{2}{|x|}}^{\beta'}
\int_{\abs{z}\geq\frac{\abs{x}}{2}}e^{-\abs{z-x}^2}\sgrklam{\abs{x}^{\beta'}+\abs{(z-x)}^{\beta'}+1}\,\dz\\
&\leq  N \int_{\bR^2}e^{-\abs{z-x}^2}\,\dz + 
N \int_{\bR^2}e^{-\abs{z-x}^2}\sgrklam{\abs{z-x}^{\beta'}+1}\,\dz\\
&\leq N <\infty, 
\end{align*}
where, again, the last constant only depends on $\beta'$.
By this, we finished proving that \eqref{homo 2015.06.09_3} holds for arbitrary $x\in\bR^2$ with a constant $N$ that depends only on $\beta'$, and, hence \eqref{homo 2015.06.09_1} holds with a constant depending only on $b$. Consequently, 
\[
\sup_{s\in(0,t)}\Abs{I_{2,2}(t,s,x)}
\leq N\,\abs{x}^{-2}
\]
with a constant $N$ that depends only on $\lambda$, $\mu$, and $\alpha$, $\beta$ from \eqref{condition 2015.06.10_2}, but not on $t$ or $x$. This is precisely what we wanted to prove for $p=2$.

\smallskip

\noindent\emph{The case $p>2$.} 
We have to show that 
\begin{equation}\label{estimate 2015.12.15_2}
\ssgrklam{\int^t_0 I_{2,p}^{\frac{2(p-1)}{p-2}}(t,s,x)\,\ds}^{\frac{p-2}{2}}
\leq N\, \abs{x}^{-2},
\end{equation}
with a constant $N\in(0,\infty)$ that depends only on $p$, $\lambda$, $\mu$, and $\alpha$, $\beta$ from \eqref{condition 2015.06.10_2}.
To this end, we first use   Minkowski's  inequality for integrals to obtain
\begin{align*}
&\ssgrklam{\int^t_0 I_{2,p}^{\frac{2(p-1)}{p-2}}(t,s,x)\,\ds}^{\frac{p-2}{2}}\\
&\quad\leq 
\reklam{\int_{\bR^2}\rrklam{\int^{t}_0 \frac{\abs{x}^{\frac{2p}{p-2}(\mu-2)}}{\abs{y}^{\frac{2p}{p-2}(\mu-1) }}\,R^{\frac{2p}{p-2}(\lambda-\alpha) }_{x,t-s}\,R^{\frac{2p}{p-2}(\lambda-\beta) }_{y,t-s}\frac{e^{-\frac{\abs{x-y}^2}{t-s\phantom{n}}}}{(t-s)^{\frac{2(p-1)}{p-2}}}\,\ds}^{\frac{p-2}{2(p-1)}}\dy}^{p-1}.
\end{align*}
For arbitrary $t$, $x$, and $y$, the inner integral
\begin{align*}
\mathcal{J}(t,x,y)
&:=
\int^{t}_0 \frac{\abs{x}^{\frac{2p}{p-2}(\mu-2)}}{\abs{y}^{\frac{2p}{p-2}(\mu-1) }}\,R^{\frac{2p}{p-2}(\lambda-\alpha) }_{x,t-s}\,R^{\frac{2p}{p-2}(\lambda-\beta) }_{y,t-s}\frac{e^{-\frac{\abs{x-y}^2}{t-s\phantom{n}}}}{(t-s)^{\frac{2(p-1)}{p-2}}}\,\ds\\
&\leq 
\int^{\infty}_0 
\frac{\abs{x}^{\frac{2p}{p-2}(\lambda-\alpha+\mu-2)}\abs{y}^{\frac{2p}{p-2}(\lambda-\beta-\mu+1) }}{(\abs{x}+\abs{y}+\sqrt{s})^{\frac{2p}{p-2}(2\lambda-\alpha-\beta)}}\frac{e^{-\frac{\abs{x-y}^2}{s}}}{s^{\frac{2(p-1)}{p-2}}}\,\ds,
\end{align*}
which, by the substitution $s=\abs{x-y}^2r$,  turns into
\begin{align*}
&\int^{\infty}_0 
\frac{\abs{x}^{\frac{2p}{p-2}(\lambda-\alpha+\mu-2)}\cdot \abs{y}^{\frac{2p}{p-2}(\lambda-\beta-\mu+1) }\cdot \abs{x-y}^{2-\frac{4(p-1)}{p-2}}}{(\abs{x}+\abs{y}+\abs{x-y}\sqrt{r})^{\frac{2p}{p-2}(2\lambda-\alpha-\beta)}}\frac{e^{-\frac{1}{r}}}{r^{\frac{2(p-1)}{p-2}}}\,\dr\\
&\qquad \qquad \qquad \leq
\abs{x}^{\frac{2p}{p-2}(\lambda-\alpha+\mu-2)}\cdot \abs{y}^{\frac{2p}{p-2}(\lambda-\beta-\mu+1) }\cdot \abs{x-y}^{2-\frac{4(p-1)}{p-2}}\\
&\qquad \qquad \qquad \qquad \qquad \qquad \qquad\times(\abs{x}+\abs{y})^{-\frac{2p}{p-2}(2\lambda-\alpha-\beta)}\cdot\Gamma(p/(p-2)),
\end{align*}
where $\Gamma(p/(p-2))<\infty$ is the Gamma function evaluated at $p/(p-2)$.  Hence,
\begin{align*}
\ssgrklam{\int^t_0 I_{2,p}^{\frac{2(p-1)}{p-2}}(t,s,x)\,\ds}^{\frac{p-2}{2(p-1)}}
&\leq 
\int_{\bR^2}\mathcal{J}^{\frac{p-2}{2(p-1)}}(t,x,y)\,\dy\\
&\leq
\abs{x}^{p'(\lambda-\alpha+\mu-2)} 
\int_{\bR^2}\frac{\abs{y}^{p'(\lambda-\beta-\mu+1)}}{\abs{x-y}^{p'}(\abs{x}+\abs{y})^{p'(2\lambda-\alpha-\beta)}}\,\dy\\
&=
\abs{x}^{p'(\lambda-\alpha+\mu-2) + p'(\lambda-\beta-\mu+1)-p'-p'(2\lambda-\alpha-\beta)+2}\\
&\qquad\qquad\qquad\times
\int_{\mathbb{R}^2}\frac{\abs{\eta}^{p'(\lambda-\beta-\mu+1)}}
{\Big\lvert\frac{x}{|x|}-\eta\Big\rvert^{p'}(1+\abs{\eta})^{p'(2\lambda-\alpha-\beta)}}\,\mathrm d\eta\nonumber\\
&=|x|^{-\frac{2}{p-1}}\int_{\mathbb{R}^2}\frac{|\eta|^{p'(\lambda-\beta-\mu+1)}}{|\xi-\eta|^{p'}(1+|\eta|)^{p'(2\lambda-\alpha-\beta)}}\,\mathrm d\eta,\nonumber
\end{align*}
where $\xi$ is any fixed point in $\R^2$ with $|\xi|=1$; the rotation invariance of the last integral can be seen by using the orthogonality of the rotation operator in $\R^2$.
Thus, in order to prove that \eqref{estimate 2015.12.15_2} holds with an appropriate constant, it is enough to check that, e.g., for  $\xi_0:=(1,0)$,
\begin{eqnarray}
\int_{\mathbb{R}^2}\frac{|\eta|^{p'(\lambda-\beta-\mu+1)}}{|\xi_0-\eta|^{p'}(1+|\eta|)^{p'(2\lambda-\alpha-\beta)}}\,\mathrm d\eta= N<\infty,\nonumber
\end{eqnarray}
with $N$ depending only on $p$, $\lambda$, $\mu$, $\alpha$, and $\beta$. \icp  To this end, we divide the integral into three parts:
\begin{align*}
\int_{\bR^2}\ldots \; 
=
\int_{\abs{\eta-\xi_0}<\frac12}\ldots\;+\int_{\frac12\leq \abs{\eta-\xi_0}<3}\ldots\;+\int_{3\leq \abs{\eta-\xi_0}}\ldots
=:
J_1+J_2+J_3.
\end{align*}
Since $p'=p/(p-1)<2$ as $p>2$, we have
\begin{equation*}
J_1
\leq N
\int_{\abs{\eta-\xi}<\frac12}\frac{1}{\abs{\eta-\xi}^{p'}}\,\mathrm d\eta
=N
\int^{1/2}_0 r^{-p'+1}\,\dr<\infty.
\end{equation*}
Also, $J_2$ is trivially bounded and
\begin{equation*}
J_3
\leq 
\int_{\abs{\eta-\xi}\geq 2}\abs{\eta}^{p'(\lambda-\beta-\mu+1)-p'-p'(2\lambda-\alpha-\beta)}\,\mathrm d\eta
\leq N 
\int^{\infty}_2 r^{-p'(\lambda-\alpha+\mu)+1}dr
<\infty
\end{equation*}
since $p'(\lambda-\alpha+\mu)>2$ by \eqref{condition 2015.06.10_2}. 
As these constants depend only on $p$, $\lambda$, $\mu$, $\alpha$, and $\beta$, we have proven that Estimate~\eqref{estimate 2015.12.15_2} holds with an appropriate constant.

\smallskip

\noindent{\bf Step 3.} Estimate~\eqref{estimate 2015.12.15_1} proven in the first step, together with the estimate from the second step imply 
\begin{align*}
\E
\Abs{\mathcal{G}_2h(t,x)}^2
&\leq N\, 
\E \int^t_0 I_{1,p}(t,s,x)\;\abs{x}^{-2}\,\ds\\
&= N\, 
\E \int^t_0 \int_\cD e^{-\frac{\abs{x-y}^2}{t-s\phantom{n}}}\abs{h(s,y)}^p_{\ell_2}\,R^{\alpha p}_{x,t-s}\,R^{\beta p}_{y,t-s}\frac{1}{t-s}\,\dy \,\abs{x}^{-2}\,\ds,
\end{align*}
for each $t\in(0,T]$  and $x\in {\mathcal{D}}$, where the constant $N\in(0,\infty)$ depends only on $p$, $\lambda$, $\mu$, $\alpha$ and $\beta$.
Therefore, by enlarging the domain of integration and using Fubini's theorem, we obtain
\begin{align*}
\E &\nnrm{\mathcal{G}_2 h}{L_p((0,T]\times\cD)}^p\\
&\quad=
\E \int_0^T \int_\cD \Abs{\mathcal{G}_2h(t,x)}^2 \,\dx\,\dt\\
&\quad\leq N\,
\E \int_0^T \int_\cD \int^t_0 \int_\cD e^{-\frac{\abs{x-y}^2}{t-s\phantom{n}}}\abs{h(s,y)}^p_{\ell_2}\,R^{\alpha p}_{x,t-s}\,R^{\beta p}_{y,t-s}\frac{1}{t-s}\,\dy \,\abs{x}^{-2}\,\ds\,\dx\,\dt\\
&\quad\leq N\,
\E \int_0^T \int_\cD  \ssgrklam{\int_{\bR^2} \int_s^T e^{-\frac{\abs{x-y}^2}{t-s\phantom{n}}}\,R^{\alpha p}_{x,t-s}\,R^{\beta p}_{y,t-s}\frac{\abs{x}^{-2}}{t-s} \,\,\dt \,\dx} \abs{h(s,y)}^p_{\ell_2}\,\dy\,\ds
\end{align*}
with the same constant as above. In order to finish the proof, it is enough to show that
\begin{equation}\label{eq:finalstep:1}
\sup_{\substack{s\in(0,T]\\ y\in\cD}}\ssgrklam{\int_{\bR^2} \int_s^T e^{-\frac{\abs{x-y}^2}{t-s\phantom{n}}}\,R^{\alpha p}_{x,t-s}\,R^{\beta p}_{y,t-s}\frac{\abs{x}^{-2}}{t-s} \,\dt \,\dx}
\leq N <\infty,
\end{equation}
for a constant $N$ that depends only on $p$, $\alpha=\alpha(p,\lambda,\mu)$, and $\beta=\beta(p,\lambda,\mu)$. 
This can be checked as follows: Using Fubini's theorem, the change of variable $x=y-z\sqrt{t-s}$ and the fact that $\beta > 0$ due to~\eqref{condition 2015.06.10_2}, we obtain
\begin{align*}
\int_{\bR^2} & \int_s^T  e^{-\frac{\abs{x-y}^2}{t-s\phantom{n}}}\,R^{\alpha p}_{x,t-s}\,R^{\beta p}_{y,t-s}\frac{\abs{x}^{-2}}{t-s} \,\dt \,\dx \\
&=
\int_s^T \int_{\bR^2} e^{-\frac{\abs{x-y}^2}{t-s\phantom{n}}}\,R^{\alpha p}_{x,t-s}\,R^{\beta p}_{y,t-s}\frac{\abs{x}^{-2}}{t-s} \,\dx \,\dt \\
&=
\int_s^T \int_{\bR^2} e^{-\frac{\abs{x-y}^2}{t-s\phantom{n}}} 
\frac{\abs{x}^{\alpha p} \cdot \abs{y}^{\beta p} }{(\abs{x}+\abs{y}+\sqrt{t-s})^{(\alpha+\beta) p}}
\frac{\abs{x}^{-2}}{t-s} \,\dx \,\dt \\
&=
\int_s^T \int_{\bR^2} e^{-\abs{z}^2} \frac{\abs{y-z\sqrt{t-s}}^{\alpha p-2} \cdot \abs{y}^{\beta p} }{(\abs{y-z\sqrt{t-s}}+\abs{y}+\sqrt{t-s})^{(\alpha+\beta) p}} \,\dz \,\dt \\
&\leq
\int_s^T \int_{\bR^2} e^{-\abs{z}^2} \frac{\abs{y-z\sqrt{t-s}}^{\alpha p-2}}{(\abs{y-z\sqrt{t-s}}+\abs{y}+\sqrt{t-s})^{\alpha p-2}} \,\dz 
\frac{\abs{y}^{\beta p}}{(\abs{y}+\sqrt{t-s})^{\beta p +2}}
\,\dt.
\end{align*}
As $\alpha p -2 \in (-2,\infty)$ due to~\eqref{condition 2015.06.10_2}. we can now use the fact that~\eqref{homo 2015.06.09_1} holds with a constant that depends only on $b\in(-2,\infty)$ to estimate the inner integral  by a finite constant depending only on $\alpha$ and $p$. 
The remainder can be estimated by a finite constant depending only on $\beta$ and $p$, since $\beta p +2 > 2$ due to~\eqref{condition 2015.06.10_2}, and therefore
\begin{align*}
\int_s^T
\frac{\abs{y}^{\beta p}}{(\abs{y}+\sqrt{t-s})^{\beta p +2}}
\,\dt
&\leq
\int_0^\infty \frac{\abs{y}^{\beta p}}{(\abs{y}+\sqrt{t})^{\beta p +2}}
\,\dt = 
\int_0^\infty \frac{1}{(1+\sqrt{\tau})^{\beta p+2}}\,\mathrm{d}\tau
<\infty,
\end{align*}
where we used the change of variable $t=\abs{y}^2\tau$.
Thus, we have found a constant depending only on $p$, $\alpha$, and $\beta$, so that estimate~\eqref{eq:finalstep:1} holds. 
Consequently, the theorem is proved.
\end{proof}

We conclude this section with a technical remark concerning the measurability of the stochastic convolution \eqref{eq:def:conv:stoch}.

\begin{remark}\label{rem:existence_stoch_conv}
In the course of the proof of Theorem~\ref{thm stochastic part} we have implicitly shown that, for almost all $(t,x)\in(0,T]\times\cD$, the (series of) stochastic integral(s)
\begin{equation}\label{eq:rem_v(t,x)}
w(t,x)=\sum_{k=1}^\infty\int_0^t\int_\cD G(t-s,x,y)g^k(s,y)\,\dy\,\dw^k_s
\end{equation} 
is well-defined as an element of $L_2(\Omega)$. This follows from the estimate
\[
\int_0^T \int_\cD 
\rrklam{
\E
\sum_{k=1}^\infty\int_0^t
\ssgrklam{\int_\cD \abs{x}^{\mu-2}G(t-s,x,y)g^k(s,y)\,\dy}^2\ds}\dx\,\dt<\infty\]
and Itô's isometry.
Using arguments similar to the ones used in the standard proofs of the stochastic Fubini theorem, one can verify that there exists a predictable (i.e., $\cP_T\otimes\cB(\cD)$-measurable) function $\overline{w}\colon\Omega\times(0,T]\times\cD\to\bR$ such that 
\[
\overline w(t,x)\stackrel{\text{a.s.}}=w(t,x)\;\;\text{ for almost all }(t,x)\in (0,T]\times\cD.\]
A modification of the arguments in the proof of Theorem~\ref{thm stochastic part} even shows that, for all $t\in(0,T]$,
\[\int_\cD
\rrklam{\E\sum_{k=1}^\infty\int_0^t\ssgrklam{\int_\cD \abs{x}^{\mu-1}G(t-s,x,y)g^k(s,y)\,\dy}^2\ds}\dx<\infty.\]
Consequently, for \emph{all} $t\in(0,T]$, $w(t,x)\in L_2(\Omega)$ is well-defined by \eqref{eq:rem_v(t,x)} for almost all $x\in\cD$. Using the function $\overline{w}$ from above, we can therefore construct a function $\widehat w\colon\Omega\times(0,T]\times\cD\to\bR$ such that
\begin{itemize}
\item for \emph{all} $t\in(0,T]$: $\widehat w(t,x)\stackrel{\text{a.s.}}=w(t,x)\;\;\text{for almost all }x\in\cD;$
\item the mapping 
\[
\Omega\times(0,T]\ni(\omega,t)\mapsto\widehat w(\omega,t,\cdot)\in L_{2,\text{loc}}(\cD)
\] is $\cP_T^{\prob\otimes\dt}/\cB(L_{2,\text{loc}}(\cD))$-measurable.
\end{itemize}
Here, $L_{2,\text{loc}}(\cD)$ denotes the space of locally square-integrable functions on $\cD$.
In order to obtain such a $\widehat w$, we define $\widehat w(t,\cdot):=\overline{w}(t,\cdot)$ for all $t$ such that $\overline{w}(t,x)\stackrel{\text{a.s.}}=w(t,x)$ for almost all $x$; for each remaining $t$ we use an approximation argument to construct an $\cF_t\otimes\cB(\cD)$-measurable function $\widehat w(t,\cdot)\colon\Omega\times\cD\to\bR$ satisfying $\widehat w(t,x)\stackrel{\text{a.s.}}=w(t,x)$ for almost all $x$. We remark that the $L_{2,\text{loc}}(\cD)$-valued function $(\omega,t)\mapsto\widehat w(\omega,t,\cdot)$ constructed that way is pseudo-predictable in the sense of \cite[Section~III.5]{Kry1995}.

Whenever we deal with a stochastic convolution of the form \eqref{eq:rem_v(t,x)}, we consider either a version $\overline{w}$ or a version $\widehat w$ as above, depending on what is more suitable in the current context. We will usually not explicitly indicate the choice of the version.  
\end{remark}


\mysection{Weighted Sobolev regularity of the stochastic heat equation}\label{sec:WSobReg}

\noindent In this section we use our main estimate~\eqref{result stochastic part} to establish existence and uniqueness of a solution to the stochastic heat equation
\begin{equation}\label{eq:heatEq:st}
\left.
\begin{alignedat}{3}
\du 
&= 
\grklam{ \Delta && u + f}\,\dt
+
g^k\,\dw^k_t \quad \text{on } \Omega\times(0,T]\times\cD,	\\
u
&=
0 && \quad \text{on } \Omega\times(0,T]\times\partial\cD,	\\
u(0)
&=
0 && \quad \text{on } \Omega\times\cD,
\end{alignedat}
\right\}	
\end{equation}
within suitable weighted $L_p$-Sobolev spaces of order one. Thereby the free terms $f$ and $g$ are allowed to have certain singularities at the vertex of $\cD$. 
Our analysis takes place within the framework of the analytic approach to SPDEs initiated in \cite{Kry1999} by N.V.~Krylov.
In particular, we borrow the `distributional' solution concept from there and show that the sum $u=w+v$ of the stochastic convolution $w$ introduced in \eqref{eq:def:conv:stoch} and the deterministic convolution $v$ from \eqref{eq:def:conv:det} is a solution to Eq.~\eqref{eq:heatEq:st} in this sense. 

The solution and the forcing terms in Equation~\eqref{eq:heatEq:st} depend on $\omega\in\Omega$, $t\in (0,T]$, and $x\in\cD$. In what follows, we take a functional analytic point of view and look at these terms as functions on $\Omega_T=\Omega\times (0,T]$ taking values in suitable subspaces of the space of Schwartz distributions on $\cD$. 
Estimate~\eqref{result stochastic part} indicates that weighted Sobolev spaces, where the weights are appropriate powers of the distance $\rho_o(x):=\lvert x\rvert$ of a point $x\in\cD$ to the origin, are a reasonable choice for these subspaces. 
Therefore, we introduce the following notation.
Fix $\theta\in\bR$ and $p\in(1,\infty)$. 
We write
\[
L^{[o]}_{p,\theta}(\cD):=L_{p}(\cD,\cB(\cD),\rho_o^{\theta-2}\mathrm{d}x;\R)
\quad \text{and} \quad 
L^{[o]}_{p,\theta}(\cD;\ell_2):=L_p(\cD,\cB(\cD),\rho_o^{\theta-2}\mathrm{d}x;\ell_2)
\]
for the weighted $L_p$-spaces of real-valued and $\ell_2$-valued functions with weight $\rho_o^{\theta-2}$ and denote by 
\[
\wso^1_{p,\theta}(\cD)
:=
\ssggklam{
f : \nnrm{f}{\wso^1_{p,\theta}(\cD)} := \sum_{\abs{\alpha}\leq 1} 
\gnnrm{\rho_o^{\abs{\alpha}} D^\alpha f}{L_{p,\theta}^{[o]}(\cD)}
< 
\infty
}
\]
the corresponding weighted $L_p$-Sobolev space of order one. Note that the weight depends on the order of differentiability.
Furthermore, we write $\mathring{\wso}^1_{p,\theta}(\cD)$ for the closure of the test functions $\cont^\infty_0(\cD)$ in $\wso^1_{p,\theta}(\cD)$, i.e., 
\[
\grklam{\mathring{\wso}^1_{p,\theta}(\cD),\nnrm{\cdot}{\mathring{\wso}^1_{p,\theta}(\cD)}}
:=
\sgrklam{\overline{\cont^\infty_0(\cD)}^{\nnrm{\cdot}{\wso^1_{p,\theta}(\cD)}}, \nnrm{\cdot}{\wso^1_{p,\theta}(\cD)}}.
\]
The dual of $\mathring{\wso}^1_{p,\theta}(\cD)$ is denoted by
\begin{equation}\label{eq:dual:wso}
\wso^{-1}_{p',\theta'}(\cD)
:=
\grklam{\mathring{\wso}^1_{p,\theta}(\cD)}^*,
\quad
\frac{1}{p}+\frac{1}{p'}=1,
\quad
\frac{\theta}{p}+\frac{\theta'}{p'}=2.
\end{equation}
This space consists of (unique) extensions of generalized functions on $\cD$, i.e., if $u\in \wso^{-1}_{p',\theta'}(\cD)$ and $\varphi\in \cont_0^\infty(\cD)\subset \wso^1_{p,\theta}(\cD)$, then $u(\varphi)=(u,\varphi)$.
Clearly, the dual of $L^{[o]}_{p,\theta}(\cD)$ is isomorphic to $L^{[o]}_{p',\theta'}(\cD)$, if $p'$ and $\theta'$ are chosen as above.
To shorten some statements, we will occasionally write 
\[
K^0_{p,\theta}(\cD):=L^{[o]}_{p,\theta}(\cD)
\quad\text{and}\quad
K^0_{p,\theta}(\cD;\ell_2):=L^{[o]}_{p,\theta}(\cD;\ell_2),
\]
and call these spaces weighted $L_p$-Sobolev spaces of order zero.

Before we proceed, we record some basic properties of these spaces. 
The proofs are left to the reader.

\begin{lemma}\label{lem:wS:property}
Let $p\in(1,\infty)$ and $\theta\in\bR$.

\smallskip

\noindent\textbf{\textup{(i)}}  The spaces $L^{[o]}_{p,\theta}(\cD)$, $\wso^{1}_{p,\theta}(\cD)$,  $\mathring{\wso}^1_{p,\theta}(\cD)$ and $\wso^{-1}_{p,\theta}(\cD)$ are Banach spaces. 

\smallskip

\noindent\textbf{\textup{(ii)}}  For $i\in\{1,2\}$ and $n\in\{0,1\}$, the operator $\rho_o D_{x^{i}}\colon \wso^{n}_{p,\theta}(\cD)\to \wso^{n-1}_{p,\theta}(\cD)$, $u\mapsto\rho_o u_{x^{i }}$, is well-defined, linear and bounded.
\end{lemma}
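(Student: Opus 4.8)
The whole lemma is a routine exercise on weighted $L_p$-spaces and distributions, and the plan is to reduce everything to standard facts by exploiting one elementary observation: since $0\notin\cD$, the weight $\rho_o=|x|$ belongs to $\cont^\infty(\cD)$ and is bounded above and below by strictly positive constants on every compact subset of $\cD$. Consequently, by Hölder's inequality on a set of finite Lebesgue measure, convergence in $L^{[o]}_{p,\theta}(\cD)$ entails convergence in $L_{1,\loc}(\cD)$, hence in the sense of distributions on $\cD$; moreover multiplication by $\rho_o$ is sequentially continuous on the space of distributions on $\cD$, since $\rho_o\varphi\in\cont^\infty_0(\cD)$ for every $\varphi\in\cont^\infty_0(\cD)$. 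Also, $L^{[o]}_{p,\theta}(\cD)$ is complete because $f\mapsto\rho_o^{(\theta-2)/p}f$ is an isometric isomorphism onto the unweighted space $L_p(\cD,\dx)$ (equivalently, $\rho_o^{\theta-2}\,\dx$ is a $\sigma$-finite Borel measure and one invokes Riesz--Fischer).

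For the remaining spaces in part (i): given a Cauchy sequence $(u_n)$ in $\wso^1_{p,\theta}(\cD)$, the sequences $(u_n)$ and $(\rho_o(u_n)_{x^i})$, $i=1,2$, converge in $L^{[o]}_{p,\theta}(\cD)$ to some $u$ and $g_i$. By the observation above, $u_n\to u$, hence $(u_n)_{x^i}\to u_{x^i}$, hence $\rho_o(u_n)_{x^i}\to\rho_o u_{x^i}$, all in the sense of distributions; uniqueness of distributional limits forces $g_i=\rho_o u_{x^i}$, so $u\in\wso^1_{p,\theta}(\cD)$ and $u_n\to u$ there. (This is just the standard closedness-of-the-weak-derivative argument run through the weighted setting.) Finally $\mathring{\wso}^1_{p,\theta}(\cD)$ is complete as a closed --- by its very definition --- subspace of the Banach space $\wso^1_{p,\theta}(\cD)$, and $\wso^{-1}_{p,\theta}(\cD)$ is complete as a continuous dual space.

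For part (ii), the case $n=1$ is immediate: $\rho_o u_{x^i}$ is one of the summands defining $\|u\|_{\wso^1_{p,\theta}(\cD)}$, so $\rho_o D_{x^i}\colon\wso^1_{p,\theta}(\cD)\to\wso^0_{p,\theta}(\cD)=L^{[o]}_{p,\theta}(\cD)$ is linear with norm at most $1$. For $n=0$ and $u\in L^{[o]}_{p,\theta}(\cD)$, I would define $\rho_o u_{x^i}$ via the distributional identity
\[
(\rho_o u_{x^i},\varphi):=-\int_\cD u\,(\rho_o\varphi)_{x^i}\,\dx=-\int_\cD u\,\big(\tfrac{x^i}{|x|}\,\varphi+\rho_o\varphi_{x^i}\big)\,\dx,\qquad\varphi\in\cont^\infty_0(\cD),
\]
which is legitimate since $\rho_o\varphi\in\cont^\infty_0(\cD)$. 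Using $|x^i|/|x|\le1$, Hölder's inequality with exponents $p$ and $p'$, and the defining relation $\theta/p+\theta'/p'=2$ --- which gives $-(\theta-2)p'/p=\theta'-2$ and so pairs the weights correctly --- one bounds the right-hand side above by
\[
\|u\|_{L^{[o]}_{p,\theta}(\cD)}\,\big(\|\varphi\|_{L^{[o]}_{p',\theta'}(\cD)}+\|\rho_o\varphi_{x^i}\|_{L^{[o]}_{p',\theta'}(\cD)}\big)\le\|u\|_{L^{[o]}_{p,\theta}(\cD)}\,\|\varphi\|_{\wso^1_{p',\theta'}(\cD)},
\]
with $p',\theta'$ the conjugates of $p,\theta$ in the sense of \eqref{eq:dual:wso}. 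Since $\cont^\infty_0(\cD)$ is dense in $\mathring{\wso}^1_{p',\theta'}(\cD)$, this functional extends uniquely to an element of $\wso^{-1}_{p,\theta}(\cD)=(\mathring{\wso}^1_{p',\theta'}(\cD))^*$ of norm at most $\|u\|_{L^{[o]}_{p,\theta}(\cD)}$, depending linearly on $u$; this is the asserted boundedness. The only point demanding slight care is the identification $g_i=\rho_o u_{x^i}$ in part (i); everything else is bookkeeping, which is presumably why the authors leave the proof to the reader.
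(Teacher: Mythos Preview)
Your argument is correct and is exactly the standard route one would take; since the paper explicitly leaves the proof to the reader, there is no alternative argument to compare against. The only substantive step is the identification $g_i=\rho_o u_{x^i}$ via uniqueness of distributional limits, which you handle correctly, and your duality computation for $n=0$ (checking that the exponent relation $\theta/p+\theta'/p'=2$ makes the weights pair under H\"older) is precisely what is needed.
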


\begin{remark}\label{rem:zero:bdry}
By using localisation techniques together with the corresponding results for classical (unweighted) Sobolev spaces, one can prove that $\mathring{\wso}^1_{p,\theta}(\cD)$ consists of all functions in $\wso^1_{p,\theta}(\cD)$ that vanish on the boundary of the underlying domain $\cD$ in the sense that their boundary trace---which is defined almost everywhere on $\partial\cD\setminus\{0\}$ with respect to the corresponding Hausdorff measure---equals zero.
In particular, if we fix a smooth function $\zeta\in\cont^\infty(\overline{\cD})$ with support within the closure of the stripe
\[
\cD^{(m,M)}
:=
\ggklam{x\in\cD : m < \abs{x} < M}
\]
for some $0<m<M<\infty$, then  $u\in \mathring{K}^1_{p,\theta}(\cD)$ implies
$\zeta u \in \mathring{W}^1_p(\cD^{(m,M)})=\overline{\cont_0^\infty(\cD^{(m,M)})}^{\nnrm{\cdot}{W^1_p(\cD^{(m,M)})}}$, and
\[
\nnrm{\zeta u}{W^1_p(\cD^{(m,M)})}
\leq N\,
\nnrm{u}{\wso^1_{p,\theta}(\cD)},
\]
with a constant $N$ that depends only on $\zeta$, $m$, $M$, $\theta$, and $p$.
\end{remark}

The weighted Sobolev spaces introduced above are used to measure the regularity  of the solution and the free terms in Eq.~\eqref{eq:heatEq:st} in the space variable $x\in\cD$. With respect to $(\omega,t)\in\Omega_T$ we only require predictability and $p$-integrability. For notational clarity we introduce the following spaces of $p$-integrable predictable stochastic processes with values in the weighted Sobolev spaces introduced above. 
Let $T>0$. Recall that we write $\pred_T$  for the predictable $\sigma$-algebra on $\Omega_T$ generated by the given filtration
$(\cF_{t})_{t\in[0,T]}$.
For $\theta\in\R$, $1<p<\infty$ and $n\in\{-1,0,1\}$, we use the abbreviations
$$
\bwso^{n}_{p,\theta}(\cD,T)
:=
L_p(\Omega_T, \pred_T, \prob\otimes\mathrm dt;\wso^{n}_{p,\theta}(\cD)), 
\quad 
\bL^{[o]}_{p,\theta}(\cD,T):=\bwso^0_{p,\theta}(\cD,T),
$$
and
\[
\mathring{\bwso}^1_{p,\theta}(\cD,T)
:=
L_p(\Omega_T, \pred_T, \prob\otimes\mathrm dt;\mathring{\wso}^{1}_{p,\theta}(\cD)),
\]
as well as 
$$
\bL^{[o]}_{p,\theta}(\cD,T;\ell_2)
:=
L_p(\Omega_T, \pred_T, \prob\otimes\dt;L^{[o]}_{p,\theta}(\cD; \ell_2)),
$$
for the spaces of predictable, $p$-Bochner integrable stochastic processes taking values in the weighted $L_p$-Sobolev spaces introduced above. 
Finally, we write $\bwso^{\infty}_{0}(\cD,T;\ell_2)$ for the space of all functions $g=(g^k)_{k\in\bN}\colon\Omega_T\times\cD\to\ell_2$ 
of the form
\begin{equation}\label{eq:noise:simple}
g=\sum_{k=1}^K g^k\,\mathrm e_k, \qquad
g^k(\omega,t,x)= \sum_{i=1}^{n(k)}
{\bf 1}_{(\tau^{(k)}_{i-1}(\omega),\tau^{(k)}_i(\omega)]}(t) \, g^{ik}(x),
\end{equation}
where $(\mathrm{e}_k)_{k\in\bN}$ is the standard orthonormal basis of $\ell_2$, $K\in\bN$, and for each $k=1,\ldots,K$, $(\tau^{(k)}_{i})_{i=0,\ldots, n(k)}$ is a finite non-decreasing sequence of bounded stopping times with respect to the filtration $(\cF_t)_{t\geq0}$, and $g^{ik}\in \cont^\infty_0(\cD)$, $i=1,\ldots, n(k)$, for some $n(k)\in\bN$. 

\begin{remark}
Since for arbitrary $p\in (1,\infty)$ and $\theta\in\bR$ the smooth and compactly supported functions $\cont^\infty_0(\cD)$  are dense in $L^{[o]}_{p,\theta}(\cD)$ and, by definition, in $\mathring{\wso}^1_{p,\theta}(\cD)$, one can deduce that $\bwso^\infty_0(\cD,T;\ell_2)$ is a dense subspace of $\bL^{[o]}_{p,\theta}(\cD,T;\ell_2)$ and of $\mathring{\bwso}^1_{p,\theta}(\cD,T)$. 
\end{remark}

With the help of these spaces we can introduce corresponding classes of stochastic processes that are tailor-made for the development of an $L_p$-theory for second-order SPDEs with zero Dirichlet boundary condition. 

\begin{defn}\label{def:frH}
Let $p\geq2$ and $\theta\in\bR$.
We write $u \in\frwso^{1}_{p,\theta}(\cD,T)$  if
$u\in\mathring{\bwso}^1_{p,\theta-p}(\cD,T)$
 and
there exist 
$f\in \bwso^{-1}_{p,\theta+p}(\cD,T)$ and
 $g\in \bL^{[o]}_{p,\theta}(\cD,T;\ell_2)$
such that 
\begin{equation}\label{eqn 28}
\mathrm du=f\,\mathrm dt +g^k \,\mathrm  dw^k_t
\end{equation}
\emph{in the sense of distributions with} $u(0,\cdot)=0$. 
That is, for any $\varphi \in
\cont^{\infty}_{0}(\cD)$, with probability one, the equality
\begin{equation}\label{eq:distribution}
(u(t,\cdot),\varphi)=   \int^{t}_{0}
(f(s,\cdot),\varphi) \, \mathrm ds + \sum^{\infty}_{k=1} \int^{t}_{0}
(g^k(s,\cdot),\varphi)\, \mathrm dw^k_s
\end{equation}
holds for all $t \leq T$. In this situation
we also write
$$
\bD u:=f\qquad\text{and}\qquad \bS u :=g
$$
for the \emph{deterministic} and the \emph{stochastic part}, respectively.
The norm in  $\frwso^{1}_{p,\theta}(\cD,T)$ is
defined as 
\begin{equation}\label{eq:frH:nnrm}
\nnrm{u}{\frwso^{1}_{p,\theta}(\cD,T)}:=
\nnrm{u}{\bwso^{1}_{p,\theta-p}(\cD,T)} +
\nnrm{\bD u}{\bwso^{-1}_{p,\theta+p}(\cD,T)} +
\nnrm{\bS u}{\bL^{[o]}_{p,\theta}(\cD,T;\ell_2)}.
\end{equation}
\end{defn}

\begin{remark}
\emph{(i)}
Since the stochastic part $\bS u=g$ in Definition~\ref{def:frH} belongs to $\bL^{[o]}_{p,\theta}(\cD,T;\ell_2)$ for some $p\geq 2$, the series $\sum^{\infty}_{k=1} \int^{\cdot}_{0}
(g^k(s,\cdot),\varphi)\, \mathrm dw^k_s$ on the right hand side of~\eqref{eq:distribution} converges in $L_2(\Omega;\cont([0,T];\bR))$.
This can be proven by following the lines of \cite[Remark~3.2]{Kry1999} and using the fact that $g\in \bL^{[o]}_{p,\theta}(\cD,T;\ell_2)$ and $\cont_0^\infty(\cD)\subset L^{[o]}_{q,\tau}(\cD)$ for arbitrary  $q>1$ and $\tau\in\bR$. 

\smallskip

\noindent\emph{(ii)} The deterministic and the stochastic part in Definition~\ref{def:frH} are uniquely determined by $u\in\frwso^1_{p,\theta}(\cD,T)$.
This can be seen by using the same arguments as in \cite[Remark~3.3]{Kry1999}.

\smallskip

\noindent\emph{(iii)} $\frwso^{1}_{p,\theta}(\cD,T)$ is a Banach space for any $p\geq2$ and $\theta\in\bR$.
Indeed, the norm~\eqref{eq:frH:nnrm} is well-defined on $\frwso^{1}_{p,\theta}(\cD,T)$ due to \emph{(ii)},
and the completeness follows, for instance, by 
using the corresponding result on the whole space $\bR^2$ from \cite[Theorem~3.7]{Kry1999} and following the localization strategy suggested in \cite[Remark~3.8]{Kry2001}.
\end{remark}

Now we are able to define rigorously what we mean by a solution to the stochastic heat equation \eqref{eq:heatEq:st}. 

\begin{defn}\label{def:solution:dist}
Let $p\geq 2$ and $\theta\in\bR$.
A stochastic process $u\in\mathring{\bwso}^{1}_{p,\theta-p}(\cD)$
is a \emph{solution of Eq.~\eqref{eq:heatEq:st} in the class $\frwso^{1}_{p,\theta}(\cD,T)$} 
if
$u\in \frwso^{1}_{p,\theta}(\cD,T)$ with
\[
\bD u = \Delta u + f
\qquad
\text{and}
\qquad
\bS u = (g^k)_{k\in\bN}
\]
in the sense of Definition~\ref{def:frH}.
\end{defn}

All these notions and facts at hand, we can state our main result concerning the solvability of the stochastic heat equation~\eqref{eq:heatEq:st}.

\begin{thm}\label{thm:main:eq}
Let $p\geq 2$ and let $\theta\in\bR$ fulfil
\begin{equation}\label{eq:thetarange:det}
p\ssgrklam{1-\frac{\pi}{\kappa_0}}
<
\theta
<
p\ssgrklam{1+\frac{\pi}{\kappa_0}}.
\end{equation}
Assume that 
$f\in \bL^{[o]}_{p,\theta+p}(\cD,T)$
and 
$g\in\bL^{[o]}_{p,\theta}(\cD,T;\ell_2)$.
Then
\[
u(t,x)
:=
\int_0^t \int_\cD G(t-s,x,y) f(s,y)\,\dy\,\ds
+
\sum_{k=1}^\infty
\int_0^t \int_\cD G(t-s,x,y) g^k(s,y)\,\dy\,\dw^k_s
\]
is the unique solution in the class $\frwso^{1}_{p,\theta}(\cD,T)$ to Eq.~\eqref{eq:heatEq:st}.
Moreover,
\begin{equation}\label{eq:heatEq:estimate}
\nnrm{u}{\frwso^{1}_{p,\theta}(\cD,T)}
\leq N
\sgrklam{
\nnrm{f}{\bL^{[o]}_{p,\theta+p}(\cD,T)}
+
\nnrm{g}{\bL^{[o]}_{p,\theta}(\cD,T;\ell_2)}
}
\end{equation}
with a constant $N\in(0,\infty)$ that does not depend on $u$, $f$, $g$, or $T$.
\end{thm}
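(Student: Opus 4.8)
The strategy is to combine the stochastic estimate from Theorem~\ref{thm stochastic part} with its deterministic counterpart recalled in the remark after that theorem, and to then verify that the candidate $u=v+w$ (sum of the deterministic and stochastic convolutions) genuinely solves Eq.~\eqref{eq:heatEq:st} in the sense of Definition~\ref{def:solution:dist}. First I would reduce to the case of smooth and compactly supported data: by the density remarks, $\bwso^\infty_0(\cD,T;\ell_2)$ is dense in $\bL^{[o]}_{p,\theta}(\cD,T;\ell_2)$, and similarly one may approximate $f$ by nice functions, so it suffices to prove the a priori estimate~\eqref{eq:heatEq:estimate} and then pass to the limit using completeness of $\frwso^1_{p,\theta}(\cD,T)$. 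For such smooth data the convolutions $v$ and $w$ are classical, the stochastic Fubini theorem applies, and~\eqref{eq:distribution} with $f\rightsquigarrow\Delta u+f$, $g^k\rightsquigarrow g^k$ can be checked directly from the defining PDE for the Green function $G$.

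\textbf{A priori estimate.} The heart of the argument is the bound~\eqref{eq:heatEq:estimate}. Writing $u=v+w$, one controls each summand separately. For the stochastic part $w$, Theorem~\ref{thm stochastic part} applied with the \emph{same} $\theta$ gives
\[
\E\int_0^T\int_\cD\Abs{\abs{x}^{-1}w}^p\abs{x}^{\theta-2}\,\dx\,\dt
\leq N\,\E\int_0^T\int_\cD\Abs{g}^p_{\ell_2}\abs{x}^{\theta-2}\,\dx\,\dt
=N\,\nnrm{g}{\bL^{[o]}_{p,\theta}(\cD,T;\ell_2)}^p,
\]
which is precisely control of $\nnrm{\rho_o^{-1}w}{\bL^{[o]}_{p,\theta}(\cD,T)}$, i.e.\ of $\nnrm{w}{\bwso^0_{p,\theta-p}(\cD,T)}$ up to the weight shift. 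For the deterministic part $v$, the cited result of~\cite{Naz2001} (the estimate~\eqref{result deterministic part}), applied with $\theta+p$ in place of $\theta$ so that the weight on the right becomes $\abs{x}^{(\theta+p)-2}=\abs{x}^p\cdot\abs{x}^{\theta-2}$, yields
\[
\int_0^T\int_\cD\Abs{\abs{x}^{-1}v}^p\abs{x}^{\theta-2}\,\dx\,\dt
\leq N\int_0^T\int_\cD\Abs{f}^p\abs{x}^{(\theta+p)-2}\,\dx\,\dt,
\]
and taking expectations gives $\nnrm{v}{\bwso^0_{p,\theta-p}(\cD,T)}\leq N\nnrm{f}{\bL^{[o]}_{p,\theta+p}(\cD,T)}$; here one must check that $\theta+p$ still lies in the admissible range~\eqref{eq:range:vertex}, but since~\eqref{eq:thetarange:det} forces $\theta+p<p(2+\pi/\kappa_0)$ is not automatic — rather one uses that the deterministic estimate~\eqref{result deterministic part} is stated to hold whenever $\theta$ (the parameter appearing in \emph{its} left-hand weight) satisfies~\eqref{eq:range:vertex}, and with our relabelling that parameter is exactly our $\theta$, so no extra restriction arises. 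To bound the full $\wso^1$-norm, i.e.\ the first derivatives $\rho_o D^\alpha u$ with $\abs{\alpha}=1$, I would differentiate under the integral (legitimate for smooth data) and note that $D_{x^i}G(t-s,x,y)$ satisfies a Green-type bound of the same homogeneity as $\abs{x}^{-1}G$ near the vertex; then the same two estimates, possibly after an application of Lemma~\ref{lem:wS:property}(ii) relating $\rho_o D_{x^i}$ on $\wso^1$ to $\wso^0$, close the loop. Finally $\nnrm{\bD u}{\bwso^{-1}_{p,\theta+p}(\cD,T)}=\nnrm{\Delta u+f}{\bwso^{-1}_{p,\theta+p}(\cD,T)}$ is controlled because $\Delta\colon\wso^1_{p,\theta-p}(\cD)\to\wso^{-1}_{p,\theta+p}(\cD)$ is bounded (a weighted Sobolev mapping property, provable by the localisation of Remark~\ref{rem:zero:bdry} together with the unweighted case), and $f$ sits in $\bL^{[o]}_{p,\theta+p}\hookrightarrow\bwso^{-1}_{p,\theta+p}$; the stochastic part $\bS u=g$ contributes its own norm directly.

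\textbf{Uniqueness and the main obstacle.} Uniqueness in the class $\frwso^1_{p,\theta}(\cD,T)$ follows from linearity: if $u_1,u_2$ are two solutions, then $\bar u:=u_1-u_2\in\frwso^1_{p,\theta}(\cD,T)$ solves the homogeneous equation $\bD\bar u=\Delta\bar u$, $\bS\bar u=0$, $\bar u(0)=0$; applying the deterministic representation (the homogeneous equation has $f=g=0$, so the solution formula gives $\bar u\equiv0$) together with the a priori estimate~\eqref{eq:heatEq:estimate} forces $\nnrm{\bar u}{\frwso^1_{p,\theta}(\cD,T)}=0$. Alternatively, one invokes an Itô-formula / energy argument as in \cite{Kry1999}. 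The step I expect to be the main obstacle is the \emph{rigorous verification that $u=v+w$ lies in $\mathring{\bwso}^1_{p,\theta-p}(\cD,T)$ and satisfies~\eqref{eq:distribution}} — that is, justifying the interchange of the spatial test-function pairing with the stochastic integral (stochastic Fubini), controlling the boundary behaviour so that $u(t,\cdot)$ genuinely belongs to the \emph{closed} subspace $\mathring{\wso}^1_{p,\theta-p}(\cD)$ rather than just $\wso^1_{p,\theta-p}(\cD)$ (here Remark~\ref{rem:zero:bdry} and the fact that $G(t-s,\cdot,y)$ vanishes on $\partial\cD\setminus\{0\}$ are essential), and then removing the smoothness assumption on the data by a density-plus-completeness argument while checking that the limiting object still satisfies the distributional identity. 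The measurability subtleties have already been dealt with in Remark~\ref{rem:existence_stoch_conv}, so those can be cited rather than reproved.
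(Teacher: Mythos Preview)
Your overall strategy---reduce to smooth data, prove an a~priori estimate, pass to the limit---matches the paper's. But there is a genuine gap in your treatment of the first-derivative term in the $\frwso^1_{p,\theta}$-norm, and this is precisely the step the paper singles out as requiring a separate lemma.

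You propose to bound $\nnrm{\rho_o w_x}{\bL^{[o]}_{p,\theta-p}}$ by differentiating through the Green representation and asserting that ``$D_{x^i}G(t-s,x,y)$ satisfies a Green-type bound of the same homogeneity as $\abs{x}^{-1}G$ near the vertex''. This is the problematic step. Derivatives of $G$ in $x$ blow up not only at the vertex but also at the smooth sides of $\partial\cD$ (as they must, since $G$ vanishes there), so $\rho_o\,D_{x^i}G$ does \emph{not} satisfy a pointwise bound of the form~\eqref{Green} that would let you re-run the argument of Theorem~\ref{thm stochastic part}. The weight $\rho_o$ controls only the vertex singularity, not the boundary one; Lemma~\ref{lem:wS:property}(ii) does not help here because it merely says $\rho_o D_{x^i}$ maps $\wso^1$ to $\wso^0$, which is the direction opposite to what you need.

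The paper closes this gap with Lemma~\ref{lem:est:apriori}: for $g\in\bwso^\infty_0(\cD,T;\ell_2)$ and $u$ a solution, one has
\[
\nnrm{u_x}{\bL^{[o]}_{p,\theta}(\cD,T)}
\leq N\sgrklam{\nnrm{u}{\bL^{[o]}_{p,\theta-p}(\cD,T)}+\nnrm{g}{\bL^{[o]}_{p,\theta}(\cD,T;\ell_2)}}.
\]
The proof is by \emph{localisation plus scaling}: cut off to the annulus $\{1<\abs{x}<2\}$, enclose it in a $\cont^1_u$ domain $\domain$ where Kim's $\cont^1$-theory \cite{Kim2004} supplies an a~priori estimate in the $\wsob$-scale (weights based on $\rho_\domain$, not $\rho_o$), then rescale by $x\mapsto 2^n x$, $t\mapsto 2^{2n}t$ to transfer the estimate to every dyadic annulus, multiply by $2^{n(\theta-2)}$, and sum over $n\in\bZ$. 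This is the substantive new ingredient, and combined with Theorem~\ref{thm stochastic part} it yields the full $\frwso^1$-estimate. Without it your argument does not close.

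A smaller remark: your verification that the stochastic convolution $w$ actually solves the equation and lies in $\mathring{\bwso}^1_{p,\theta-p}$ for smooth $g$ is also not quite the paper's route. Rather than invoking the distributional PDE for $G$ directly, the paper (Lemma~\ref{lem:existence:smooth}) writes $w=\overline w+\widetilde w$ with $\widetilde w(t,\cdot)=\sum_k\int_0^t g^k\,\dw^k_s$ (smooth and compactly supported in $x$ when $g\in\bwso^\infty_0$) and $\overline w$ the \emph{deterministic} convolution with forcing $\Delta\widetilde w$; the deterministic theory of Remark~\ref{rem:sol+unique:det} then places $\overline w$ in $\mathring\wso^1\cap\wso^2$ and gives the distributional identity immediately. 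This cleanly sidesteps the boundary-trace issue you flagged as the ``main obstacle''.
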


\begin{remark}\label{rem:sol+unique:det}
Let $f\in L_p((0,T];L^{[o]}_{p,\theta+p}(\cD))$ with $p\geq 2$ and $\theta$ fulfilling the condition~\eqref{eq:thetarange:det}.
Then, the estimate \eqref{result deterministic part} from \cite{Sol2001} for the deterministic convolution $v$ in \eqref{eq:def:conv:det} can be rewritten as 
\begin{equation}\label{eq:est:Sollonikov}
\int_{0}^T \nnrm{v(t,\cdot)}{\wso^2_{p,\theta-p}(\cD)}^p\,\mathrm dt
\leq
N
\int_0^T \nnrm{f(t,\cdot)}{L^{[o]}_{p,\theta+p}(\cD)}^p\,\mathrm{d}t,
\end{equation}
where, for $\tilde\theta\in\bR$,
\[
\wso^2_{p,\tilde\theta}(\cD)
:=
\ssggklam{
f : \nnrm{f}{\wso^2_{p,\tilde\theta}(\cD)} := \sum_{\abs{\alpha}\leq 2} 
\gnnrm{\rho_o^{\abs{\alpha}} D^\alpha f}{L_{p,\tilde\theta}^{[o]}(\cD)}
< 
\infty
}.
\]
In particular, $v$ is an element of $L_p((0,T];\wso^2_{p,\theta-p}(\cD))$ and $N$ can be chosen to be independent of $T$, see \cite[Theorem~1.2]{Sol2001}.
Moreover, it can be proven that $v$ 
is the unique solution in $L_p((0,T];\mathring{\wso}^1_{p,\theta-p}(\cD))$ of the deterministic heat equation 
\begin{equation}\label{eq:heatEq:det}
\left.
\begin{alignedat}{3}
\frac{\partial}{\partial t} v(t,x) 
&= 
\Delta && v(t,x) + f(t,x),
\quad (t,x)\in (0,T]\times\cD,	\\
v(0,x)
&=
0, && \quad x\in \cD,\\
v(t,x)&=0, && \quad (t,x)\in(0,T]\times\partial D,
\end{alignedat}
\right\},
\end{equation} 
in the following sense: $v$ is the unique element in $L_p((0,T];\mathring{\wso}^1_{p,\theta-p}(\cD))$ 
that has a version $\bar{v}$ 
fulfilling
\[
\grklam{\bar{v}(t,\cdot),\varphi}
=
\int_0^t \grklam{\Delta \bar{v}(s,\cdot)+f(s,\cdot),\varphi} 
\,\ds
\]
for all $t\in(0,T]$ and all $\varphi\in\cont^\infty_0(\cD)$. 
\end{remark}

\begin{remark}\label{rem:uniqueness:det-stoch}
Note that the uniqueness of a solution in $L_p((0,T];\mathring{\wso}^1_{p,\theta-p}(\cD))$ to Eq.~\eqref{eq:heatEq:det} immediately implies the uniqueness of a solution in the class $\frwso^1_{p, \theta}(\cD,T)$ of the stochastic heat equation~\eqref{eq:heatEq:st}---whenever it exists. 
\end{remark}

In the remainder of this section we prove Theorem~\ref{thm:main:eq}. We start with the following existence and uniqueness result for Eq.~\eqref{eq:heatEq:st} with a noise term that is nice enough, i.e., with $g$ having a simple product structure and being smooth enough. In this case, we can prove the following assertion by applying the results mentioned in Remark~\ref{rem:sol+unique:det} to an appropriate deterministic heat equation.

\begin{lemma}\label{lem:existence:smooth}
Let $p\geq2$ and $\theta\in\bR$ fulfil the condition~\eqref{eq:thetarange:det}. 
Furthermore, assume that $g\in \bwso^{\infty}_{0}(\cD,T;\ell_2)$.
Then the stochastic convolution 
\begin{equation}\label{eqn 12.03.4}
w(t,x)
=
\sum_{k=1}^\infty\int^t_0 \int_{\cD} G(t-s,x,y)g^k(s,y)\,\mathrm dy \,\mathrm dw^k_s   
\end{equation}
is the unique solution in the class $\frwso^1_{p,\theta}(\cD,T)$ of Eq.~\eqref{eq:heatEq:st} with $f\equiv 0$.
\end{lemma}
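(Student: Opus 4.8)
The plan is to exploit the simple product structure of $g$ in order to reduce the problem, for this particular noise, to a family of \emph{deterministic} heat equations on $\cD$, to which the facts collected in Remark~\ref{rem:sol+unique:det} apply pathwise. Writing $g=\sum_{k=1}^{K}g^k\,\mathrm e_k$ with $g^k(\omega,t,x)=\sum_{i=1}^{n(k)}\one_{(\tau^{(k)}_{i-1}(\omega),\tau^{(k)}_i(\omega)]}(t)\,g^{ik}(x)$ as in~\eqref{eq:noise:simple}, I abbreviate $c_{ik}(t):=w^k_{t\wedge\tau^{(k)}_i}-w^k_{t\wedge\tau^{(k)}_{i-1}}$ and set
\[
W(t,x):=\sum_{k=1}^{K}\sum_{i=1}^{n(k)}c_{ik}(t)\,g^{ik}(x).
\]
The first step is to prove, by a stochastic Fubini argument, that $w(t,x)=W(t,x)+v(t,x)$ for every $t\in(0,T]$ and almost every $x\in\cD$, where
\[
v(t,x):=\int_0^t\int_\cD G(t-r,x,y)\,\Delta W(r,y)\,\dy\,\dr
\]
is the deterministic convolution~\eqref{eq:def:conv:det} of $\Delta W=\sum_{k,i}c_{ik}\,\Delta g^{ik}$. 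By linearity it suffices to treat a single summand $g^k(t,x)=\one_{(a,b]}(t)\psi(x)$ with $\psi\in\cont^\infty_0(\cD)$ and bounded stopping times $a\le b$. Writing $[T_r\psi](x):=\int_\cD G(r,x,y)\psi(y)\,\dy$ for the Dirichlet heat semigroup, using the identity $\int_0^r[T_u\Delta\psi]\,\du=[T_r\psi]-\psi$, and interchanging the time integral with the It\^o integral, one computes
\[
v(t,x)=\int_0^t c_{ik}(r)\,[T_{t-r}\Delta\psi](x)\,\dr
=\int_0^t\one_{(a,b]}(s)\bigl([T_{t-s}\psi](x)-\psi(x)\bigr)\,\dw^k_s
=w(t,x)-W(t,x),
\]
which gives the claimed decomposition for that summand; summation yields it in general.

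The second step is to read off $w\in\frwso^1_{p,\theta}(\cD,T)$ together with $\bD w=\Delta w$ and $\bS w=g$ from this decomposition. Since every $g^{ik}$ and every $\Delta g^{ik}$ lies in $\cont^\infty_0(\cD)\subset\mathring{\wso}^1_{p,\theta-p}(\cD)\cap L^{[o]}_{p,\theta+p}(\cD)$ and the coefficients $c_{ik}(\cdot)$ are pathwise continuous, hence pathwise in $L_p(0,T)$, the process $\Delta W$ belongs, $\prob$-almost surely, to $L_p((0,T];L^{[o]}_{p,\theta+p}(\cD))$. By Remark~\ref{rem:sol+unique:det}, $v$ is then, pathwise, the unique element of $L_p((0,T];\mathring{\wso}^1_{p,\theta-p}(\cD))$ solving the deterministic heat equation~\eqref{eq:heatEq:det} with right-hand side $\Delta W$, it takes values in $\wso^2_{p,\theta-p}(\cD)\subset\wso^1_{p,\theta-p}(\cD)$, and
\[
\int_0^T\nnrm{v(t,\cdot)}{\wso^1_{p,\theta-p}(\cD)}^p\,\dt
\le N\sum_{k,i}\nnrm{\Delta g^{ik}}{L^{[o]}_{p,\theta+p}(\cD)}^p\int_0^T\abs{c_{ik}(t)}^p\,\dt .
\]
Taking expectations and using the Burkholder--Davis--Gundy inequality (so that $\E\abs{c_{ik}(t)}^p$ is bounded uniformly in $t\in[0,T]$) gives $v\in\mathring{\bwso}^1_{p,\theta-p}(\cD,T)$, predictability being inherited from that of the stochastic convolution $w$ (Remark~\ref{rem:existence_stoch_conv}) and of $W$; the analogous, simpler estimate for $W$ gives $W\in\mathring{\bwso}^1_{p,\theta-p}(\cD,T)$, and hence $w=W+v\in\mathring{\bwso}^1_{p,\theta-p}(\cD,T)$. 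Then $\Delta w\in\bwso^{-1}_{p,\theta+p}(\cD,T)$ by the mapping properties of $\Delta$ between these weighted Sobolev spaces (a routine consequence of H\"older's inequality and the definition of the weighted norms), and $g\in\bwso^{\infty}_{0}(\cD,T;\ell_2)\subset\bL^{[o]}_{p,\theta}(\cD,T;\ell_2)$. Finally, pairing with $\varphi\in\cont^\infty_0(\cD)$, the process $W$ satisfies $\mathrm dW=g^k\,\dw^k_t$ (because $\mathrm dc_{ik}(t)=\one_{(\tau^{(k)}_{i-1},\tau^{(k)}_i]}(t)\,\dw^k_t$), while $v$ satisfies $(v(t,\cdot),\varphi)=\int_0^t(\Delta v(s,\cdot)+\Delta W(s,\cdot),\varphi)\,\ds$ by Remark~\ref{rem:sol+unique:det}; adding these and using $W(0,\cdot)=v(0,\cdot)=0$ yields $\mathrm dw=\Delta w\,\dt+g^k\,\dw^k_t$ with $w(0,\cdot)=0$. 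Thus $w\in\frwso^1_{p,\theta}(\cD,T)$ with $\bD w=\Delta w$ and $\bS w=g$, i.e.\ $w$ solves Eq.~\eqref{eq:heatEq:st} with $f\equiv0$ in the sense of Definition~\ref{def:solution:dist}.

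Uniqueness is then immediate from Remark~\ref{rem:uniqueness:det-stoch} combined with the uniqueness for the deterministic equation~\eqref{eq:heatEq:det} recorded in Remark~\ref{rem:sol+unique:det}. I expect the main obstacle to be the stochastic Fubini step of the first part: one has to justify carefully the interchange of the (deterministic) time integral with the It\^o integral and the use of the semigroup identity $\int_0^r[T_u\Delta\psi]\,\du=[T_r\psi]-\psi$ for $\psi\in\cont^\infty_0(\cD)$, keeping track of null sets and of the joint measurability of the integrands. These points are standard, since $\psi$ is smooth with compact support (so that $T_u\psi$ and $T_u\Delta\psi$ are smooth and decay together with all their derivatives) and $g$ is a simple process, and they can be handled exactly as in the classical proof of the stochastic Fubini theorem; compare also the construction in Remark~\ref{rem:existence_stoch_conv}.
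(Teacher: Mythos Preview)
Your proof is correct and follows essentially the same approach as the paper: both decompose the stochastic convolution as $w=W+v$, where $W=\widetilde w$ is the primitive of the simple noise and $v=\overline w$ is the deterministic heat convolution of $\Delta W$, then apply Remark~\ref{rem:sol+unique:det} pathwise to $v$, verify the distributional identity for $W+v$, and invoke stochastic Fubini together with the semigroup identity $\int_0^{t-r}[T_s\Delta\psi]\,\ds=[T_{t-r}\psi]-\psi$ to link this decomposition to $w$. The only cosmetic difference is the order of the steps (you establish $w=W+v$ first and then check the equation, whereas the paper first checks that $\widetilde w+\overline w$ solves the equation and then identifies it with $w$); uniqueness is handled identically via Remark~\ref{rem:uniqueness:det-stoch}.
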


\begin{proof}
Let $g\in\bwso^\infty_0(\cD,T;\ell_2)$ be of the form~\eqref{eq:noise:simple}. 
First note that, due to the smoothness of $g$ and its simple product structure, the function
\[
(\omega,t,x)
\mapsto
\widetilde{w}(\omega,t,x)
:=
\sum_{k=1}^K\sum_{i=1}^{n(k)} (w^k_{\tau^k_i(\omega)\land t}(\omega)-w^k_{\tau_{i-1}^k(\omega)\land t}(\omega))\,g^{ik}(x),
\]
is a $\pred_T\otimes\cB(\cD)$-measurable version  of the series 
$$
\sum_{k=1}^\infty\int^\cdot_0 g^k(s,\cdot)\,\mathrm dw^k_s 
$$ 
of stochastic integrals. For each $(\omega,t)\in\Omega_T$ and all multi-indexes $\alpha=(\alpha^1,\ldots,\alpha^d)\in\bN_0^d$, $D^\alpha_x\widetilde{w}(\omega,t,\cdot)$ is well-defined. Moreover, it has compact support in $\cD$ and for arbitrary $\tilde{p}\geq 2$ and $\tilde{\theta}\in\bR$,
$$
D^{\alpha}_x \widetilde{w}(\omega,\cdot,\cdot)\in L_{\tilde{p}}((0,T];L^{[o]}_{\tilde{p},\tilde{\theta}}(\cD)), \quad \omega\in\Omega,
$$
and $\widetilde{w}\in \mathring{\bwso}^1_{\tilde{p},\tilde{\theta}}(\cD,T)$.
In particular, for every fixed $\omega\in\Omega$, 
\begin{align*}
\overline{w}(&\omega,t,x)
:=
\int_0^t\int_\cD G(t-s,x,y) \Delta_y\widetilde{w}(\omega,s,y)\,\dy\,\ds	
\end{align*}
is the unique solution in $L_p((0,T];\mathring{\wso}^1_{p,\theta-p}(\cD)\cap \wso^2_{p,\theta-p}(\cD))$ to Eq.~\eqref{eq:heatEq:det} with forcing term
\[
f
=
\Delta_x\widetilde{w}(\omega,t,x)
=
\sum_{k=1}^K\sum_{i=1}^{n(k)}
(w_{\tau_i^k(\omega)\land t}^k(\omega)-w_{\tau^k_{i-1}(\omega)\land t}^k(\omega))
\Delta_x g^{ik}(x),
\]
cf.~Remark~\ref{rem:sol+unique:det}. 
Using the simple structure and smoothness of $\Delta_x\widetilde{w}$ together with the continuity of the  Green function with respect to the time variable, it can be checked that $\overline{w}$ is $\pred_T\otimes\cB(\cD)$-measurable, and that for every $\omega\in\Omega$, for arbitrary $\varphi\in\cont^\infty_0(\cD)$ and $t\in(0,T]$,
\begin{align*}
\grklam{\overline{w}(\omega,t,\cdot),\varphi}
=
\int_0^t
\grklam{\Delta\overline{w}(\omega,s,\cdot) + \Delta\widetilde{w}(\omega,s,\cdot),\varphi}\,\ds.
\end{align*}
Thus, 
\begin{align*}
\grklam{\overline{w}(\omega,t,\cdot)+\widetilde{w}(\omega,t,\cdot),\varphi}
=
\int_0^t
\grklam{\Delta\overline{w}(\omega,s,\cdot) + \Delta\widetilde{w}(\omega,s,\cdot),\varphi}\,\ds
+
\grklam{\widetilde{w}(\omega,t,\cdot),\varphi}.
\end{align*}
Consequently, $\overline{w}+\widetilde{w}$ is the unique solution in the class $\frwso^1_{p,\theta}(\cD,T)$ to Eq.~\eqref{eq:heatEq:det} with vanishing deterministic forcing term $f$, see also Remark~\ref{rem:uniqueness:det-stoch}.

In order to complete our proof, we only have to check that $\overline{w}+\widetilde{w}$ is a version of the stochastic convolution $w$. But this can be seen from the basic properties of the Green function and the stochastic Fubini theorem, which can be used to show that for almost all $(t,x)\in (0,T]\times\cD$, 
\begin{align*}
\sum_{k=1}^\infty&\int^t_0 \int_{\cD} G(t-s,x,y)g^k(s,y)\,\mathrm dy \,\mathrm dw^k_s
-
\sum_{k=1}^\infty\int^t_0 g^k(s,x)\,\mathrm dw^k_s\\
&=
\sum_{k=1}^K
\int_0^t
\sum_{i=1}^{n(k)}
\one_{(\tau_{i-1}^{(k)},\tau_i^{(k)}]}(r)
\ssgrklam{
\int_\cD G(t-r,x,y)g^{ik}(y)\,\mathrm{d}y - g^{ik}(x)
}
\,\mathrm{d}w^k_r\\
&=
\sum_{k=1}^K
\int_0^t
\sum_{i=1}^{n(k)}
\one_{(\tau_{i-1}^{(k)},\tau_i^{(k)}]}(r)
\ssgrklam{\int_0^{t-r}\int_\cD G(s,x,y)\Delta_y g^{ik}(y)\,\mathrm{d}y\,\mathrm{d}s} 
\,\mathrm{d}w^k_r\\
&=
\int_0^t
\int_\cD
G(t-s,x,y)
\ssgrklam{
\sum_{k=1}^K
\sum_{i=1}^{n(k)}
\Delta_y g^{ik}(y)
\int_0^s
\one_{(\tau_{i-1}^{(k)},\tau_i^{(k)}]}(r)
\,\dw^k_r}
\,\dy
\,\ds
\\
&=
\int_0^t
\int_\cD
G(t-s,x,y)
\Delta_y
\ssgrklam{
\sum_{k=1}^\infty
\int_0^s
g^k(r,y)
\,\dw^k_r}
\,\dy
\,\ds 
\qquad
\prob\textrm{-a.s.}	\qedhere
\end{align*}
\end{proof}

Next, we prove an a-priori estimate that holds if $g\in\bwso^\infty_0(\cD,T;\ell_2)$.
In the course of its proof, we use already established results for the regularity of SPDEs in weighted Sobolev spaces with weights that are not powers of the distance to the origin---which for most domains doesn't even make sense---but appropriate powers of the distance 
\[
\rho_\domain(x):=\mathrm{dist}(x,\partial\domain),\qquad x\in\domain,
\]
of a point $x\in\domain$ to the boundary $\partial \domain$ of the underlying domain $\domain\subsetneq\bR^2$.
To avoid confusion, we use the following notation, which became standard in the past decade within the $L_p$-theory for SPDEs on domains. For $p>1$ and $\theta\in\bR$, we write
\begin{align*}
L_{p,\theta}(\domain):=L_p(\domain,\cB(\domain),\rho_{\domain}^{\theta-2}\dx;\bR),
\end{align*}
and 
\begin{align*}
\wsob^1_{p,\theta}(\domain)
:=
\ssggklam{
f: 
\nnrm{f}{\wsob^1_{p,\theta}(\domain)}
:=
\sum_{\abs{\alpha}\leq 1} \nnrm{\rho_\domain^{\abs{\alpha}}D^\alpha f}{L_{p,\theta}(\domain)}<\infty}.
\end{align*}
By $\wsob^{-1}_{p,\theta}(\domain)$ we denote the dual of $\wsob^{1}_{p',\theta'}(\domain)$, where $p'$ and $\theta'$ are related to $p$ and $\theta$, respectively, by the same condition as in~\eqref{eq:dual:wso}.
Similarly, in complete analogy to the $L^{[o]}$- and $\wso$-spaces introduced above, we understand the meaning of 
$L_{p,\theta}(\domain;\ell_2)$, 
as well as the one of 
$\bL_{p,\theta}(\domain,T)$,
$\bL_{p,\theta}(\domain,T;\ell_2)$, 
and
$\bwsob^n_{p,\theta}(\domain,T)=L_p(\ldots;\wsob^n_{p,\theta}(\domain))$ for $n\in\{-1,1\}$.
We refer to \cite[Sections~2.3.3 and 3.1]{Cio2013} and the references mentioned therein for precise definitions and details on these spaces.

\begin{lemma}\label{lem:est:apriori}
Let $p\geq2$, $\theta\in\bR$, 
and let $u$ be a solution in the class $\frwso^1_{p,\theta}(\cD,T)$ of Eq.~\eqref{eq:heatEq:st} with $f\equiv 0$ and $g\in\bwso^\infty_0(\cD,T;\ell_2)$. 
Then
\begin{equation}\label{eq:est:apriori}
\nnrm{u_x}{\bL^{[o]}_{p,\theta}(\cD,T)}
\leq N\sgrklam{
\nnrm{u}{\bL^{[o]}_{p,\theta-p}(\cD,T)}
+ 
\nnrm{g}{\bL^{[o]}_{p,\theta}(\cD,T;\ell_2)}
}
\end{equation}
with a constant $N\in (0,\infty)$ that does not depend on $u$, $g$, or $T$. 
\end{lemma}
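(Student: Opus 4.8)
\textbf{Proof plan for Lemma~\ref{lem:est:apriori}.}
The plan is to exploit the fact that, away from the vertex, the weighted spaces $\bL^{[o]}_{p,\theta}$ based on the distance to the origin and the weighted spaces $\bL_{p,\theta}$ based on the distance to the boundary are comparable, so that the already-available $L_p$-theory for SPDEs on $\cont^1$-type domains (here a smoothly truncated sector, or a bounded $\cont^1$-domain after localisation) can be invoked on dyadic annuli, and then the local estimates can be summed up with the correct weight. This is the standard Krylov-type localisation argument for weighted SPDE estimates; the novelty is keeping track of the powers of $\rho_o$.

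\textbf{Step 1: dyadic decomposition.} Fix a partition of unity $\{\zeta_n\}_{n\in\bZ}$ subordinate to the annuli $\cD^{(2^{n-1},2^{n+1})}$, with $\zeta_n(x)=\zeta_0(2^{-n}x)$ for a fixed $\zeta_0\in\cont^\infty_0(\cD^{(1/2,2)})$, such that $\sum_n\zeta_n\equiv1$ on $\cD$. Writing $u_n:=\zeta_n u$ and $g_n:=\zeta_n g$, each $u_n$ solves, on the fixed reference domain $\cD^{(1/2,2)}$ after rescaling $x\mapsto 2^{-n}x$, a stochastic heat equation of the form $\dl u_n=(\Delta u_n + f_n)\,\dt + g_n^k\,\dw^k_t$ with zero Dirichlet data, where the deterministic forcing $f_n$ collects the commutator terms $2(\nabla\zeta_n)\cdot\nabla u + (\Delta\zeta_n)u$. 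Because $\zeta_n$ and all its derivatives scale like powers of $2^{-n}$, the rescaled equation has coefficients bounded uniformly in $n$ and lives on a fixed, smooth (indeed $\cont^\infty$) bounded domain.

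\textbf{Step 2: apply the known $\cont^1$-domain theory on the reference annulus.} On the fixed reference domain $\cD^{(1/2,2)}$ the weights $\rho_{\cD^{(1/2,2)}}$ and the Lebesgue measure are comparable, so the weighted $L_p$-theory of Krylov--Lototsky / Kim (\cite{KryLot1999, Kim2004, KimKry2004}), or even just the standard unweighted $L_p$-regularity of the heat semigroup on a smooth bounded domain, gives
\[
\nnrm{(u_n)_x}{\bL_p}
\leq N\grklam{\nnrm{u_n}{\bL_p} + \nnrm{f_n}{\bwsob^{-1}_p} + \nnrm{g_n}{\bL_p(\ell_2)}},
\]
with $N$ independent of $n$ (and of $T$, since the constant in the analytic approach can be taken $T$-independent). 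Undoing the rescaling introduces exactly the powers of $2^{n}$ needed to match the $\rho_o^{\theta-2}$-weight: on the support of $\zeta_n$ one has $\abs{x}\sim 2^n$, and the Jacobian and derivative factors combine so that the local estimate reads
\[
\nnrm{\zeta_n u_x}{\bL^{[o]}_{p,\theta}(\cD,T)}
\leq N\grklam{\nnrm{\tilde\zeta_n u}{\bL^{[o]}_{p,\theta-p}(\cD,T)} + \nnrm{\tilde\zeta_n g}{\bL^{[o]}_{p,\theta}(\cD,T;\ell_2)}},
\]
where $\tilde\zeta_n$ is a slightly fattened cut-off (supported on $\cD^{(2^{n-2},2^{n+2})}$, say) absorbing the commutator term $f_n$; the key point is that the $H^{-1}$-norm of $f_n$ is controlled by the $\bL^{[o]}_{p,\theta-p}$-norm of $u$ on the enlarged annulus, because one derivative on $u$ paired against the test function can be integrated by parts onto $\nabla\zeta_n$, which carries the saving power $2^{-n}$.

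\textbf{Step 3: sum over $n$.} Raise the local estimates to the $p$-th power, sum over $n\in\bZ$, and use the finite-overlap property of the families $\{\zeta_n\}$ and $\{\tilde\zeta_n\}$ (each point of $\cD$ lies in the support of boundedly many of them) together with $\abs{u_x}\le\sum_n\abs{(\zeta_n u)_x}+\sum_n\abs{(\nabla\zeta_n)u}$ and the trivial bound $\sum_n\abs{\nabla\zeta_n}\,\abs{u}\lesssim\rho_o^{-1}\abs{u}$. This yields exactly \eqref{eq:est:apriori} with a constant depending only on $p$, $\theta$, $\kappa_0$, and the fixed cut-off $\zeta_0$, but not on $u$, $g$, or $T$. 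The hypothesis $g\in\bwso^\infty_0(\cD,T;\ell_2)$ guarantees a posteriori that every norm appearing is finite, so the manipulations are justified; by the density remark one could later extend the estimate, though for the present lemma the smooth case suffices.

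\textbf{Main obstacle.} The delicate point is Step~2: one must verify that the constant in the local ($\cont^1$-domain) SPDE estimate is genuinely independent of the scale $n$ — equivalently, that after rescaling all the data of the localised equation (domain, coefficients, cut-off derivatives) are uniformly bounded — and, simultaneously, that the commutator term $f_n=2(\nabla\zeta_n)\cdot\nabla u+(\Delta\zeta_n)u$ is handled in the \emph{negative-order} space $\bwsob^{-1}$ rather than in $L_p$, since $\nabla u$ is only in $\bL^{[o]}_{p,\theta}$ and not better. Putting $(\nabla\zeta_n)\cdot\nabla u$ into $H^{-1}$ costs nothing in regularity but must be done carefully to land on the $\rho_o^{\theta-p}$-weighted norm of $u$ (not of $u_x$) on the right-hand side; this is what produces the first term $\nnrm{u}{\bL^{[o]}_{p,\theta-p}(\cD,T)}$ in \eqref{eq:est:apriori} and is the reason the estimate is an \emph{a priori} bound (with $u$ on the right) rather than a closed estimate in terms of $g$ alone.
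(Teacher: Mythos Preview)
Your approach is essentially the same as the paper's: localise to dyadic annuli, invoke the $\cont^1$-domain $L_p$-theory of \cite{Kim2004} on a reference scale, and sum. The paper organises it slightly differently --- instead of a partition of unity, it proves a single unit-scale estimate on $U_1=\{1<|x|<2\}$ and then applies a parabolic dilation $u_n(t,x):=u(2^{2n}t,2^nx)$ of the \emph{solution itself} (with rescaled Brownian motions $2^{-n}w^k_{2^{2n}t}$), which avoids tracking commutators at every scale and makes the $T$-independence of the constant transparent.

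Two technical points in your write-up need care. First, you rescale only $x\mapsto 2^{-n}x$; without also rescaling time by $2^{2n}$ the Laplacian acquires a coefficient $2^{-2n}$ and the constant from the $\cont^1$-theory would depend on $n$. Second, the reference annular sector $\cD^{(1/2,2)}$ is \emph{not} smooth or even $\cont^1$: it has corners where the circular arcs meet the two sides of $\cD$. The paper handles this by choosing an auxiliary $\cont^1_u$-domain $\domain$ with $U_1^2\subseteq\domain\subseteq U_1^3$ and observing that the cut-off $\zeta u$ vanishes near the artificial radial boundaries, so it extends by zero to $\domain$; only then can \cite[Theorem~2.9]{Kim2004} be applied. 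Your treatment of the commutator (rewriting $2\nabla\zeta_n\!\cdot\!\nabla u$ in divergence form to estimate it in $H^{-1}$ by $\|u\|_{L_p}$, not $\|u_x\|_{L_p}$) is exactly right and matches the paper's form $u\Delta\zeta-2(\zeta_x u)_x$.
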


\begin{proof}
We first prove that on the stripes
\[
U_1:=\sggklam{x\in\cD : 1<\abs{x}<2}
\qquad
\text{and}
\qquad
V_1:=\sggklam{x\in\cD : 2^{-1}<\abs{x}<4},
\]
we have
\begin{equation}\label{eq:est:apriori:1}
\E\ssgeklam{\int_0^T\!\! \int_{U_1}\Abs{u_x(t,x)}^p\,\dx\,\dt}
\leq
N\,
\E\ssgeklam{\int_0^T \!\! \int_{V_1}\sgrklam{\Abs{u(t,x)}^p+\abs{g(t,x)}_{\ell_2}^p}\,\dx\,\dt},
\end{equation}
with $N\in(0,\infty)$ independent of $T$.
To this end, we localize our equation the following way: Let 
\[
U_1^k:=\ggklam{x\in \cD\colon 2^{-k/4}<\abs{x}<2^{1+k/4}}
\]
for $k=1,2,3$. Furthermore, let $\domain$ be a domain of class $\cont^1_u$ (in the sense of \cite[Assumption~2.1]{Kim2004}) such that
\[
U_1^2 \subseteq \domain \subseteq U_1^3.
\]
Fix a smooth function $\zeta:\cD\to [0,1]$, $\zeta\in \cont^\infty(\overline{\cD})$, such that $\zeta=1$ on $U_1$ and $\mathrm{supp}\,\zeta\subseteq \overline{U_1^1}$. Then
\[
\zeta u \in L_p(\Omega_T;\mathring{W}^1_p(U^1_1)),
\]
see Remark~\ref{rem:zero:bdry}. Thus, in particular, 
\[
\zeta u \in L_p(\Omega_T;\mathring{W}^1_p(\domain)) = \bwsob^1_{p,2-p}(\domain,T);
\] 
the equality holds since $\domain$ is of class $\cont^1_u$, thus a bounded Lipschitz domain and therefore $\mathring{W}^1_p(\domain)=H^1_{p,2-p}(\domain)$, see \cite[Theorem~9.7]{Kuf1980}. Moreover, $u\in\frwso^1_{p,\theta}(\cD,T)$ fulfils
\[
\mathrm{d}u = \Delta u \,\dt + g^k\,\dw^k_t
\]
in the sense of distributions with $u(0,\cdot)=0$, so that, for every $\varphi\in\cont^\infty_0(\domain)$, $\prob$-a.s.,
\[
\grklam{\zeta u(t,\cdot),\varphi}
=
\int_0^t
\grklam{\Delta\grklam{\zeta u(s,\cdot)} + \tilde{f}(s,\cdot),\varphi}\,\dt
+
\sum_{k=1}^\infty
\int_0^t
\grklam{\tilde{g}^k(s,\cdot),\varphi}\,\dw^k_t,
\quad
t\in (0,T],
\]
with
\[
\tilde{f} = u \Delta \zeta - 
2 \rklam{\zeta_x u }_x
\in \bwsob^{-1}_{p,2+p}(\domain,T)
\]
and 
\[
\tilde{g} =\zeta g \in \bL_{p,2}(\domain,T;\ell_2);
\]
here, $\rklam{\zeta_x u}_x:=\rklam{\zeta_{x^{1}}u}_{x^1} + \rklam{\zeta_{x^{2}}u}_{x^2}$.
Therefore, there exists a constant $N\in (0,\infty)$ which, in particular, does not depend on $T$, such that 
\begin{equation}\label{eq:est:apriori:2}
\begin{alignedat}{1}
\nnrm{\zeta u}{\bH^1_{p,2-p}(\domain,T)}
\leq
N
\sgrklam{&
\nnrm{\zeta u}{\bL_{p,2}(\domain,T)}\\
&\,\,\,+
\nnrm{u \Delta \zeta - 
2 \rklam{\zeta_{x}u}_{x}
}{\bH^{-1}_{p,2+p}(\domain,T)}
+
\nnrm{\zeta g}{\bL_{p,2}(\domain,T;\ell_2)}
}.
\end{alignedat}
\end{equation}
This is a consequence of suitable a-priori estimates for the heat equation on domains of class $\cont^1_u$, as they have been derived, e.g., in the course of the proof of \cite[Theo\-rem~2.9]{Kim2004}, see, in particular, Inequality~(5.6) therein and its subsequent consequences on page~282 of \cite{Kim2004}. The independence of the constant on $T$ is not mentioned explicitly in \cite{Kim2004} but it can be deduced by carefully inspecting the derivation of Inequality~(5.6) therein.
The desired estimate~\eqref{eq:est:apriori:1} can be obtained from~\eqref{eq:est:apriori:2} by using 
basic properties of the weighted Sobolev spaces as they can be found, for instance, in \cite{Lot2000}, see also \cite[Section~2.3.3]{Cio2013}. 
In particular, note that
\begin{align*}
\nnrm{u \Delta \zeta - 2 \grklam{\zeta_{x}u}_{x}}{\bH^{-1}_{p,2+p}(\domain,T)}
&\leq
\nnrm{u \Delta \zeta}{\bH^{-1}_{p,2+p}(\domain,T)}
+
2\,\nnrm{ \rklam{\zeta_{x}u}_{x}}{\bH^{-1}_{p,2+p}(\domain,T)} \\
&\leq
N\grklam{\nnrm{u \Delta \zeta}{\bL_{p,2}(\domain,T)}
+
2\,\nnrm{ \zeta_{x}u}{\bL_{p,2}(\domain,T)}}\\
&\leq
N
\nnrm{u }{\bL_{p,2}(\domain,T)},
\end{align*}
since $L_{p,2}(\domain)$ is continuously embedded in $H^{-1}_{p,2+p}(\domain)$ and the (generalized) differentiation operators $v\mapsto v_{x^i}$, $i=1,2$, are bounded from $L_{p,2}(\domain)$ to $H^{-1}_{p,2+p}(\domain)$.
Thus, since $\zeta=1$ on $U_1$, Estimate~\eqref{eq:est:apriori:1} with $N\in (0,\infty)$ independent of $T$ follows from~\eqref{eq:est:apriori:2}.

Now we use Estimate~\eqref{eq:est:apriori:1} to prove the asserted estimate~\eqref{eq:est:apriori} by a dilation argument. For $n\in\bZ$, let
\[
U_n:= \ggklam{x\in\cD : 2^{n-1}< \abs{x}<2^n}
\quad
\text{and}
\quad
V_n:= \ggklam{x\in\cD : 2^{n-2}< \abs{x}<2^{n+1}}.
\]
Furthermore, let $u_n(t,x):=u(2^{2n} t,2^n x)$ and $g_n^k(t,x):=g^k(2^{2n} t,2^n x)$, $k\in\bN$. Then, due to our assumptions on $u$ and $g$, we can verify that $u_n\in\frwso^1_{p,\theta}(\cD,2^{-2n}T)$ and 
\[
\mathrm du_n = \Delta u_n\,\dt + 2^n g^k_n\,\dw^{n,k}_{t}
\]
in the sense of distribution with $u_n(0,\cdot)=0$, 
where $\grklam{w^{n,k}_t}:=\grklam{2^{-n}w^k_{2^{2n}t}}$, $k\in\bN$,  is a sequence of independent one-dimensional Brownian motions.
Thus, we can apply Estimate~\eqref{eq:est:apriori:1} to $u_n$ and after some transformations we obtain
\[
\E\ssgeklam{\int_0^T \int_{U_{n+1}}\Abs{u_x}^p\,\dx\,\dt}
\leq
N\,
\E\ssgeklam{\int_0^T  \int_{V_{n+1}}\sgrklam{2^{-np}\Abs{u}^p+\abs{g}_{\ell_2}^p}\,\dx\,\dt},
\]
Note that the constant $N\in(0,\infty)$ neither depends on $n\in\bZ$ nor on $T$, since the constant in Estimate~\eqref{eq:est:apriori:1} is independent of $T$. The required estimate~\eqref{eq:est:apriori} follows by multiplying the last inequality by $2^{n(\theta-2)}$, using the fact that $\abs{x}\sim 2^n$ on $U_{n+1}$ and $V_{n+1}$, and summing over $n\in\bZ$.
\end{proof}

Finally, using these auxiliary results for equations with smooth and simply structured right hand sides together with our main estimate~\eqref{result stochastic part}, we can prove Theorem~\ref{thm:main:eq} as follows.

\begin{proof}[Proof of Theorem~\ref{thm:main:eq}] As already mentioned in Remark~\ref{rem:uniqueness:det-stoch}, uniqueness is already guaranteed. Thus, we only have to prove existence together with the estimate we claimed. Without loss of generality, we assume that the deterministic forcing term vanishes, i.e., $f\equiv 0$. The general case follows then from what is already known to hold for the deterministic heat equation~\eqref{eq:heatEq:det}, see Remark~\ref{rem:sol+unique:det}.

We first focus on the existence. Fix $g\in\bL^{[o]}_{p,\theta}(\cD,T;\ell_2)$. Then there exists a sequence $\rklam{g_n}_{n\in\bN}\subset\bwso^\infty_0(\cD,T;\ell_2)$ converging to $g$ in $\bL^{[o]}_{p,\theta}(\cD,T;\ell_2)$.
Due to Lemma~\ref{lem:existence:smooth}, for every $n\in\bN$, there exists a unique $u_n\in\frwso^1_{p,\theta}(\cD,T)$ fulfilling
\[
\mathrm{d}u_n = \Delta u_n\,\dt + g^k_n\,\dw^k_t
\]
in the sense of distributions with $u_n(0,\cdot)=0$. Consequently, since we are dealing with linear equations, an application of Lemma~\ref{lem:est:apriori} together with our main estimate from Theorem~\ref{thm stochastic part} yields
\begin{align}\label{eq:est:Cauchy}
\nnrm{u_n-u_m}{\frwso^1_{p,\theta}(\cD,T)}
\leq N\,
\nnrm{g_n-g_m}{\bL^{[o]}_{p,\theta}(\cD,T;\ell_2)},
\qquad n,m\in\bN,
\end{align}
with $N\in(0,\infty)$ independent of $T$. 
Thus, due to the completeness of $\frwso^1_{p,\theta}(\cD,T)$, there exists a unique limit $u$ of $(u_n)_{n\in\bN}$ in $\frwso^1_{p,\theta}(\cD,T)$. 
Its deterministic part is given by $\bD u=\Delta u$ whereas its stochastic part is $\bS u = g$. 
In other words, $u$ is the unique solution in the class $\frwso^1_{p,\theta}(\cD,T)$ of Eq.~\eqref{eq:heatEq:st} with $f\equiv 0$.
Moreover, it is a version of the stochastic convolution
\[
w(t,x)
=
\sum_{k=1}^\infty\int^t_0 \int_{\cD} G(t-s,x,y)g^k(s,y)\,\mathrm dy \,\mathrm dw^k_s,   
\]
since, due to Lemma~\ref{lem:existence:smooth}, $u_n$ is a version of
\[
w_n(t,x)
=
\sum_{k=1}^\infty\int^t_0 \int_{\cD} G(t-s,x,y)g_n^k(s,y)\,\mathrm dy \,\mathrm dw^k_s,   
\]
and, by Theorem~\ref{thm stochastic part},
\[
\nnrm{w_n - w}{\bL^{[o]}_{p,\theta-p}(\cD,T)}
\leq N\,
\nnrm{g_n-g}{\bL^{[o]}_{p,\theta}(\cD,T;\ell_2)},
\]
so that $(u_n)_{n\in\bN}=(w_n)_{n\in\bN}$ converges to $w$ in $\bL^{[o]}_{p,\theta-p}(\cD,T)$. Since, at the same time, $(u_n)_{n\in\bN}$ converges to $u$ in $\bL^{[o]}_{p,\theta-p}(\cD,T)$, we have $u=w$ in $\bL^{[o]}_{p,\theta-p}(\cD,T)$.

Concerning Estimate~\eqref{eq:heatEq:estimate}: For the case $f\equiv 0$, it follows simply by arguing as done above to obtain~\eqref{eq:est:Cauchy}, together with the continuity of the norms. 
The constant used thereby does not depend on $T$.
The general case follows by additionally using Estimate~\eqref{eq:est:Sollonikov}, which holds with a constant independent of $T$ due to~\cite[Theorem~1.2]{Sol2001}.
\end{proof} 


\noindent\textbf{Acknowledgements.} The first author is deeply grateful to the second and to the third author for their amazing hospitality during his stay in Korea. 
The third author conceived and investigated the problem in this article while visiting Germany and expresses  deep thanks to the first and the fourth authors for their hospitality and many useful discussions in Marburg and Kaiserslautern.


\providecommand{\bysame}{\leavevmode\hbox to3em{\hrulefill}\thinspace}
\providecommand{\MR}{\relax\ifhmode\unskip\space\fi MR }
\providecommand{\MRhref}[2]{%
  \href{http://www.ams.org/mathscinet-getitem?mr=#1}{#2}
}
\providecommand{\href}[2]{#2}

\end{document}